\newcommand{\defeq}{\stackrel{{\text{def}}}{=}}
\newcommand{\pp}[2]{\frac{\partial #1}{\partial #2}}
\newcommand{\pppp}[3]{\frac{\partial^2 #1}{\partial #2 \partial #3}}
\newtheorem{proposition}{Proposition}
\newtheorem{lemma}{Lemma}
\newtheorem{remark}{Remark}
\newtheorem{theorem}{Theorem}
\begin{document}
\date\today
\title[Characteristic Initial Value Problem]{Characteristic Initial Value Problem for Spherically Symmetric Barotropic Flow}
\author{Andr\'e Lisibach}
\address{Department of Mathematics\\Princeton University}
\email{lisibach@princeton.edu}
\begin{abstract}
We study the equations of motion for a barotropic fluid in spherical symmetric flow. Making use of the Riemann invariants we consider the characteristic form of these equations. In a first part, we show that the resulting constraint equations along characteristics can be solved globally away from the center of symmetry. In a second part, given data on two intersecting characteristics, we show existence and uniqueness of a smooth solution in a neighborhood in the future of these characteristics.
\end{abstract}
\maketitle

\section{Introduction}
%\begin{comment}
The equations of motion describing a compressible inviscid fluid are of hyperbolic type. For such equations, along a characteristic hypersurface not all of the unknown functions can be prescribed freely. The ones that can will be denoted in the following as \textit{free data}. When restricting the equations of motion to a characteristic hypersurface they become the so called \textit{constraint equations}. Given free data these are equations for the remaining unknowns which will be called \textit{derived data}. This situation is in contrast to the Cauchy problem where all of the unknown functions can be prescribed at $t=0$. The system of constraint equations is nonlinear, therefore a solution does not exist in general. We study the constraint equations for the Euler equations in the case of a barotropic fluid, i.e.~under the assumption that $p=f(\rho)$, where $p$, $\rho$ are the pressure and density of the fluid respectively. In addition we assume that the flow is spherically symmetric, hence the problem reduces to one in the $t$-$r$-plane, where $t$, $r$ denote the time and radial coordinate respectively. We use the Riemann invariants which lead to a natural formulation of the equations of motion along characteristics. The resulting constraint equations form a two by two system of nonlinear ordinary differential equations. In the first part we show existence and uniqueness of a solution of this system globally away from the center of symmetry $r=0$.

Once the constraint equations are solved and therefore characteristic data has been established, a natural follow up question is whether one can find a solution of the equations of motion in a neighborhood of two intersecting characteristic hypersurfaces in the acoustical future of the intersection. This is the content of the second part of the present work.

It is important to note that the solution thus obtained corresponds to a solution in the $t$-$r$-plane only where the jacobian of the transformation from the characteristic coordinates to the $t$-$r$-plane does not vanish (see page \pageref{tr_solution}). Such points of vanishing jacobian represent points in the singular part of the boundary of the maximal development (see for example chapter 2 of \cite{Christodoulou_2007}). Therefore, such points and their range of influence have to be excluded from the solution. Furthermore, in view of obtaining a physically acceptable solution, we note that an even further restriction might apply once a shock solution beyond a point of blowup has been established, the shock lying in the past of the boundary of the maximal development (see \cite{Christodoulou_Lisibach}).

\begin{comment}
We remark that in the case of the Einstein equations, all constraint equations are linear ordinary differential equations along the generators of the characteristic hypersurfaces and all except one of these equations are of first order. The one not being of first order is that for the conformal factor of the metric of the sections of the characteristic hypersurfaces. This satisfies a second order ordinary differential equation. This equation is also linear so formally a solution exists for all positive values of the affine parameter. However only an everywhere positive solution is physically acceptable, and the rest of the construction of the derived data depends on this positivity. There is an open set of free data for which positivity fails. This is analyzed in pages 62-78 of \cite{Christodoulou}.

We remark that in the case of the Einstein equations the constraint equations along characteristics for generic free data can not be solved. Given data on two intersecting characteristic hypersurfaces the constraint equations which in this case become a system of transport equations can be solved only locally near the intersection (see \cite{Rendall}). Luk has shown however that once a solution of the constraint equations along the intersecting characteristic hypersurfaces is given, the Einstein equations can be solved locally in the future of the intersection in a neighborhood of the two intersecting characteristic hypersurfaces \cite{Luk}.
\end{comment}

The present work can be viewed as a first step towards understanding the characterisitic initial value problem for the Euler equations without any symmetry assumptions.

%\end{comment}

\section{Equations of Motion, Characteristic System}
We review the basic equations needed for the study of a barotropic fluid in spherical symmetry.

\subsection{Equations of Motion in Spherical Symmetry}
We denote by $w$, $\rho$, $p$ the fluid velocity, the density and the pressure, respectively. We assume a barotropic equation of state, i.e.~$p=f(\rho)$, and we assume $f\in C^\infty$, $dp/d\rho,d^2p/d\rho^2>0$. The adiabatic condition decouples and we are left with
\begin{align}
  \label{eq:1}
  \partial_t\rho+\partial_r(\rho w)&=-\frac{2\rho w}{r},\\
  \label{eq:2}
  \partial_tw+w\partial_rw&=-\frac{\eta^2}{\rho}\partial_r\rho,
\end{align}
where we denote by $\eta$ the sound speed, i.e.~$\eta^2=dp/d\rho$. We assume $\rho>0$, i.e.~we exclude vacuum.

\subsection{Riemann Invariants, Characteristic System}
Let (see \cite{Riemann})
\refstepcounter{equation}\label{eq:5ab}
\begin{align}
  \label{eq:5}
  \alpha\defeq\int^{\rho}\frac{\eta(\rho')}{\rho'}d\rho'+w,\qquad\beta\defeq\int^{\rho}\frac{\eta(\rho')}{\rho'}d\rho'-w\tag{\theequation a,b}
\end{align}
and
\refstepcounter{equation}\label{eq:6ab}
\begin{align}
  c_\pm\defeq w\pm\eta,\qquad L_\pm\defeq\partial_t+c_\pm\partial_r.\tag{\theequation a,b}\label{eq:6}
\end{align}
We have
\begin{align}
  \label{eq:7}
  L_+\alpha=L_-\beta=-\frac{\eta(\alpha,\beta)}{r}(\alpha-\beta)\defeq F(\alpha,\beta,r).
\end{align}
Introducing the coordinates $u$, $v$ such that $u$ is constant along integral curves of $L_+$ and $v$ is constant along integral curves of $L_-$, \eqref{eq:7} becomes
\refstepcounter{equation}\label{eq:8ab}
\begin{align}
  \label{eq:8}
  \pp{\alpha}{v}=\pp{t}{v}F(\alpha,\beta,r),\qquad\pp{\beta}{u}=\pp{t}{u}F(\alpha,\beta,r)\tag{\theequation a,b}.
\end{align}
$t$ and $r$ satisfy the Hodograph system
\refstepcounter{equation}\label{eq:9ab}
\begin{align}
  \label{eq:9}
  \pp{r}{v}=\pp{t}{v}c_+(\alpha,\beta),\qquad \pp{r}{u}=\pp{t}{u}c_-(\alpha,\beta).\tag{\theequation a,b}
\end{align}
In the following we refer to \eqref{eq:8}, \eqref{eq:9} as the characteristic system of equations (see \cite{Courant_Friedrichs}).

From (\ref{eq:5}) we have
\begin{align}
  \label{eq:19}
  \frac{\partial(\alpha,\beta)}{\partial(\rho,w)}&=\left(
    \begin{array}{cc}
      \eta/\rho & 1\\
      \eta/ \rho & -1
    \end{array}\right).
\intertext{Therefore,}
  \label{eq:20}
  \frac{\partial(\rho,w)}{\partial(\alpha,\beta)}&=\left(
    \begin{array}{cc}
      \rho/2\eta & \rho/2\eta\\
      1/2 & -1/2
    \end{array}\right).
\end{align}
Let now
\begin{align}
  \label{eq:23}
  \chi\defeq\alpha-\beta,\qquad\chi^\dagger\defeq \alpha+\beta.
\end{align}
We have
\begin{align}
  \label{eq:24}
  \pp{\alpha}{\chi^\dagger}=\pp{\beta}{\chi^\dagger}=\frac{1}{2}.
\end{align}
Now,
\begin{align}
  \label{eq:22}
  \pp{\eta}{\chi^\dagger}&=\frac{d\eta}{d\rho}\left\{\pp{\rho}{\alpha}\pp{\alpha}{\chi^\dagger}+\pp{\rho}{\beta}\pp{\beta}{\chi^\dagger}\right\}\notag\\
&=\frac{\rho}{4\eta^2}\frac{d^2p}{d\rho^2}>0.
\end{align}
Similarly we find $\partial\eta/\partial\chi=0$. Therefore, $\eta=\eta(\chi^\dagger)$ and $d\eta/d\chi^\dagger>0$.

\section{Characteristic Initial Data}
We look at a point $(t_0,r_0)$ in the $t$-$r$-plane and denote the outgoing and incoming characteristic originating from this point by $C^+$ and $C^-$ respectively. We put the origin of the $u$-$v$-coordinates at $(t_0,r_0)$ and we set $t_0=0$.

In view of (\ref{eq:8ab}a), (\ref{eq:9ab}a) the free data on $C^+$ consists of $\beta^+(v)=\beta(0,v)$, $t^+(v)= t(0,v)$ for an increasing function $t^+$. We fix the coordinate $v$ along $C^+$ by setting $t^+(v)=v$. Then (\ref{eq:8ab}a), (\ref{eq:9ab}a) constitute the following system of nonlinear ode for the derived data $\alpha^+(v)$, $r^+(v)$ on $C^+$:
\begin{subequations}
\label{eq:18}
\begin{align}
  \label{eq:12}
  \frac{d\alpha}{dv}&=-\frac{\eta(\alpha,\beta)}{r}(\alpha-\beta),\\
  \label{eq:13}
  \frac{dr}{dv}&=\tfrac{1}{2}(\alpha-\beta)+\eta(\alpha,\beta),
\end{align}
\end{subequations}
where we omitted the superscript $+$ on $\alpha$, $\beta$ and $r$.

In view of (\ref{eq:8ab}b), (\ref{eq:9ab}b) the free data on $C^-$ consists of $\alpha^-(u)=\alpha(u,0)$, $t^-(u)=t(u,0)$ for an increasing function $t^-$. We fix the coordinate $u$ along $C^-$ by setting $t^-(u)=u$. Then (\ref{eq:8ab}b), (\ref{eq:9ab}b) constitute the following system of nonlinear ode for the derived data $\beta^-(u)$, $r^-(u)$ on $C^-$:
\begin{subequations}
\label{eq:17}
  \begin{align}
    \label{eq:15}
    \frac{d\beta}{du}&=-\frac{\eta(\alpha,\beta)}{r}(\alpha-\beta),\\
      \label{eq:16}
    \frac{dr}{du}&=\tfrac{1}{2}(\alpha-\beta)-\eta(\alpha,\beta),
  \end{align}
\end{subequations}
where we omitted the superscript $-$ on $\alpha$, $\beta$ and $r$. \eqref{eq:18}, \eqref{eq:17} are the constraint equations along $C^+$, $C^-$, respectively. The following lemma shows that there exists derived smooth data on $C^+$ and on $C^-$ as long as $C^-$ stays away from $r=0$.

\begin{lemma}
  Let $\alpha^-,\beta^+\in C^\infty(\mathbb{R}^+\cup\{0\})$ and $r_0>0$. Then
\begin{itemize}
\item [i)] the system \eqref{eq:18} with $\beta=\beta^+$, $\alpha(0)=\alpha^-(0)$ and $r(0)=r_0$ has a solution for $v\in\mathbb{R}^+\cup\{0\}$,
\item [ii)] for any $\varepsilon>0$ with $\varepsilon<r_0$ the system \eqref{eq:17} with $\alpha=\alpha^-$, $\beta(0)=\beta^+(0)$ and $r(0)=r_0$ has a solution for $u\in [0,\overline{u})$, where $\overline{u}=\sup \Big\{u'\in\mathbb{R}^+\cup\{0\}: \forall u''\in[0,u'] : r(u'')>\varepsilon\Big\}$.
\end{itemize}
\end{lemma}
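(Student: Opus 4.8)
The plan is to treat each system as a nonlinear ODE with smooth right-hand side on the region $r>0$ and to reduce both statements to a priori bounds via local existence together with a continuation argument. Since $\eta=\eta(\chi^\dagger)$ is smooth and the coefficients of \eqref{eq:18} (resp.\ \eqref{eq:17}) are smooth functions of $(\alpha,r)$ (resp.\ $(\beta,r)$) wherever $r>0$, with $\beta^+$ (resp.\ $\alpha^-$) a prescribed smooth function, Picard--Lindel\"of gives a unique smooth solution on a maximal interval $[0,V)$ (resp.\ $[0,U)$). The standard escape criterion then says that if the maximal time is finite the solution must leave every compact subset of $\mathbb R\times(0,\infty)$, i.e.\ one of $|\alpha|$ (resp.\ $|\beta|$), $r$, $1/r$ becomes unbounded. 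Everything therefore comes down to a priori control of these quantities on a finite interval $[0,V_0]$.

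For part i) the favourable structure is that $\chi=\alpha-\beta=2w$ obeys, by \eqref{eq:12}, a linearly damped equation
\begin{equation}
\frac{d\chi}{dv}=-\frac{\eta}{r}\chi-\frac{d\beta^+}{dv},\nonumber
\end{equation}
whose damping coefficient $-\eta/r\le 0$ has the right sign. An energy estimate on $\tfrac12\chi^2$ gives $\tfrac{d}{dv}\tfrac12\chi^2=-\tfrac{\eta}{r}\chi^2-\dot\beta^+\chi\le|\dot\beta^+|\,|\chi|$, hence $|\chi(v)|\le|\chi(0)|+\int_0^{V_0}|\dot\beta^+|\,dv'$, a bound that is independent of how small $r$ becomes. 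Since $\chi^\dagger=\alpha+\beta=\chi+2\beta^+$, boundedness of $\chi$ and of the data $\beta^+$ yields boundedness of $\chi^\dagger$, hence of $\eta=\eta(\chi^\dagger)$ (which also stays bounded below by a positive $\eta_{\min}$) and of $\alpha=\chi+\beta^+$; then \eqref{eq:13} bounds $r$ from above. Thus the only possible escape is $r\to 0$.

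Ruling this out is the crux, and I expect it to be the main obstacle. The mechanism is the nonlinear restoring effect visible in \eqref{eq:12}: when $w<0$ and $r$ is small the source $-\tfrac{\eta}{r}(\alpha-\beta)=\tfrac{2\eta|w|}{r}$ is large and positive, so $\alpha$, hence $w$, is driven upward, which by \eqref{eq:13} arrests the decrease of $r$. Quantitatively, in the region where $r$ decreases one has $w\le-\eta<0$; writing the $w$-equation in the independent variable $s=-\log r$ (which increases there) one finds $dw/ds\ge\eta_{\min}/2>0$ once $r$ is below a fixed threshold. As $w$ is capped by the bound of the previous paragraph, the total admissible increase of $w$ is finite, so $s$ cannot run to $+\infty$; equivalently $r$ is bounded below by a positive constant on $[0,V_0]$. (Complementarily, the integrated energy identity forces $\int_0^{V}\tfrac{\eta}{r}\chi^2\,dv<\infty$, which is incompatible with $r\to0$.) The delicate point, which must be handled carefully, is controlling a possibly oscillatory approach $r\to 0$; the naive estimate from the identity $\tfrac{d}{dv}(\chi r)=\tfrac12\chi^2-\dot\beta^+ r$ degrades on long intervals, and the logarithmic restoring effect is what gives a uniform lower bound. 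With $|\alpha|$, $r$ and $1/r$ all controlled on $[0,V_0]$, the escape criterion excludes a finite $V\le V_0$; since $V_0$ is arbitrary, $V=\infty$, proving i).

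For part ii) the geometry is different: along $C^-$ equation \eqref{eq:16} reads $dr/du=w-\eta$, so $r$ may genuinely decrease toward the center, which is why only existence up to $\overline u$ is claimed. On $[0,\overline u)$ the definition of $\overline u$ gives $r>\varepsilon$, so $1/r\le 1/\varepsilon$ and the $1/r$ escape is precluded. The remaining issue is that the damping in the $\chi$-equation is now reversed, $\tfrac{d\chi}{du}=\dot\alpha^-+\tfrac{\eta}{r}\chi$, so boundedness of $\chi$ (equivalently of $\beta$) is no longer automatic; however the identity $\tfrac{d}{du}(\chi r)=\dot\alpha^- r+\tfrac12\chi^2$ couples any growth of $\chi$ to a corresponding decrease of $r$ through \eqref{eq:16}, so a blow-up of $\beta$ before $\overline u$ would force $r$ below $\varepsilon$ first, a contradiction. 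Hence on every $[0,u_1]\subset[0,\overline u)$ the solution $(\beta,r)$ stays bounded and, by the escape criterion, extends; this gives $U\ge\overline u$ and the solution on $[0,\overline u)$.
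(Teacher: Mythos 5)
Your overall strategy --- Picard--Lindel\"of plus a continuation criterion, reducing everything to a priori bounds, and using the damped equation for $\chi=\alpha-\beta$ to bound $\chi$, $\alpha$, $\chi^\dagger$, $\eta$ and then $r$ from above --- is exactly the paper's, and that first reduction is correct. The two places where the real work happens, however, are both left with genuine gaps.

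For part i), the exclusion of $r\to 0$ is not proved. Your inequality $dw/ds\geq\eta_{\min}/2$ holds only on the set where $r$ is decreasing, and the conclusion ``the total admissible increase of $w$ is finite, so $s$ cannot run to $+\infty$'' does not follow from the sup bound on $w$ alone: $w$ could in principle decrease on the complementary set where $r$ increases, so an infinite total increase of $s$ over the decreasing portions is not immediately incompatible with $|w|\leq C$. You flag this oscillatory scenario yourself as ``the delicate point'' but do not close it, and the parenthetical claim that $\int(\eta/r)\chi^2\,dv<\infty$ is ``incompatible with $r\to 0$'' is also unjustified (the decreasing set could a priori be sparse). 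The mechanism you describe can be made rigorous --- e.g.\ by showing that where $r$ is below a threshold and $w\leq-\eta_{\min}$ the forcing $-\eta w/r$ dominates $\dot\beta^+$ so that $w$ cannot decrease there at all --- but as written this step is a heuristic, not a proof. The paper instead works with the explicit exponential representation of $\chi$, isolates the monotone quantity $\tilde\chi(v)=\chi(v_1)e^{-\int_{v_1}^v(\eta/r)}$, and in the only surviving case derives $r(v)\leq C(v^\ast-v)$, whence $\int\eta/r$ diverges logarithmically and forces $\tilde\chi\to 0$, a contradiction; this handles the oscillatory approach automatically.

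For part ii), your argument excludes only one of the two blow-up directions. The identity $\tfrac{d}{du}(\chi r)=\dot\alpha^- r+\tfrac12\chi^2$ is correct, and it does show (given an upper bound on $r$, which you have not yet established at that stage) that $\chi\to-\infty$ forces $r$ below $\varepsilon$, consistent with \eqref{eq:48}. But for $\chi\to+\infty$ (i.e.\ $\beta\to-\infty$) the claimed mechanism is backwards: \eqref{eq:48} gives $dr/du=\tfrac12\chi-\eta$, which is large and \emph{positive}, so $r$ increases and no contradiction with $r>\varepsilon$ arises; worse, the identity itself has the Riccati-type structure that would permit such a blow-up. What rules this case out is the equation of state: $\chi\to+\infty$ forces $\chi^\dagger\to-\infty$, and since $d\eta/d\chi^\dagger>0$ (see \eqref{eq:22}) this keeps $\eta$ bounded above, so the linear equation \eqref{eq:46} has a bounded coefficient $\eta/r\leq C/\varepsilon$ and $\chi$ cannot blow up in finite time. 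This thermodynamic input is the essential missing ingredient in your part ii); the upper bound on $r$ should then be derived afterwards from \eqref{eq:48}, not assumed.
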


\begin{proof}
Part i). The only possibilities for the system \eqref{eq:18} to blow up are
\begin{align}
  \label{eq:29}
  r\rightarrow 0,\qquad r\rightarrow\infty,\qquad |\alpha|\rightarrow\infty.
\end{align}
Using $\chi=\alpha-\beta$, the system \eqref{eq:18} becomes (we omit the argument of $\eta$)
\begin{subequations}
\begin{align}
  \label{eq:25}
  \frac{d\chi}{dv}&=-\frac{\eta}{r}\chi-\frac{d\beta}{dv},\\
  \label{eq:27}
  \frac{dr}{dv}&=\eta+\tfrac{1}{2}\chi.
\end{align}
\end{subequations}
From \eqref{eq:25} we obtain
\begin{align}
  \label{eq:28}
  \chi(v)=\chi(0)e^{-\int_0^v\left(\frac{\eta}{r}\right)(v')dv'}-\int_0^ve^{-\int_{v'}^v\left(\frac{\eta}{r}\right)(v'')dv''}\frac{d\beta}{dv}(v')dv'.
\end{align}
Therefore,
\begin{align}
  \label{eq:30}
  |\chi(v)|\leq|\chi(0)|+\int_0^v\left|\frac{d\beta}{dv}(v')\right|dv',
\end{align}
which implies that the absolute value of $\chi$ is bounded. The bound on $\chi$ implies that also $\alpha$ is bounded in absolute value, i.e.~$|\alpha|\leq C$. Hence also $\chi^\dagger$ is bounded in absolute value, i.e.~$|\chi^\dagger|\leq C$. From \eqref{eq:20}, \eqref{eq:23} we have
  \begin{align}
    \label{eq:199}
    \frac{d\log\rho}{d\chi^\dagger}=\frac{1}{4\eta}.
  \end{align}
In view of $\eta>0$, we deduce that $\rho\leq C$. This in turn implies that $\eta$ is bounded. From \eqref{eq:27} we have
\begin{align}
  \label{eq:31}
  r(v)=r_0+\int_0^v(\eta+\tfrac{1}{2}\chi)(v')dv'.
\end{align}
Therefore, $r$ is bounded from above.

The only possibility for blowup left to study is $r\rightarrow 0$. Let us assume
\begin{align}
  \label{eq:47}
  r\rightarrow 0\quad\textrm{as}\quad v\rightarrow v^\ast.
\end{align}
Let $0<v_1<v<v^\ast$. Integrating \eqref{eq:25} on $[v_1,v]$ yields
\begin{align}
  \label{eq:32}
  \chi(v)\geq \chi(v_1)e^{-\int_{v_1}^v\left(\frac{\eta}{r}\right)(v')dv'}-C(v-v_1).
\end{align}
Let
\begin{align}
  \label{eq:33}
  \underline{\eta}\defeq \inf_{[0,v^\ast]}\eta.
\end{align}
We have
\begin{align}
  \label{eq:34}
  \eta(v)\geq\underline{\eta}>0.
\end{align}
Let $0<\tilde{C}<\underline{\eta}$. Using the lower bounds for $\chi$ and $\eta$ as given by \eqref{eq:32} and \eqref{eq:34}, respectively, in \eqref{eq:27} and choosing $v_1\in(0,v^\ast)$ such that
\begin{align}
  \label{eq:35}
  v_1\geq v^\ast-\frac{2}{C}(\underline{\eta}-\tilde{C}),
\end{align}
where $C$ is the constant appearing in \eqref{eq:32}, we obtain
\begin{align}
  \label{eq:36}
  \frac{dr}{dv}(v)\geq \tilde{C}+\tfrac{1}{2}\chi(v_1)e^{-\int_{v_1}^v\left(\frac{\eta}{r}\right)(v')dv'},
\end{align}
Defining
\begin{align}
  \label{eq:37}
  \tilde{\chi}(v)\defeq \chi(v_1)e^{-\int_{v_1}^v\left(\frac{\eta}{r}\right)(v')dv'},
\end{align}
\eqref{eq:36} becomes
\begin{align}
  \label{eq:38}
  \frac{dr}{dv}(v)\geq \tilde{C}+\tfrac{1}{2}\tilde{\chi}(v).
\end{align}
In the case $\chi(v_1)\geq 0$ we have $\tilde{\chi}\geq 0$ and $(dr/dv)(v)\geq \tilde{C}>0$ for $v\in[v_1,v^\ast]$, which contradicts \eqref{eq:47}. We consider the case $\chi(v_1)< 0$. From \eqref{eq:37} we see that in $[v_1,v^\ast]$, $\tilde{\chi}$ is monotonically increasing and $\tilde{\chi}<0$. We define $\tilde{\phi}\defeq-\tilde{\chi}$. Then, on $[v_1,v^\ast]$, $\tilde{\phi}>0$, $\tilde{\phi}$ is monotonically decreasing and
\begin{align}
  \label{eq:39}
  \frac{dr}{dv}(v)\geq \tilde{C}-\tfrac{1}{2}\tilde{\phi}(v).
\end{align}

We look at the subcase $2\tilde{C}\geq \tilde{\phi}(v_1)$. In this subcase we have $2\tilde{C}\geq \tilde{\phi}(v)$ for $v\in[v_1,v^\ast]$, which implies $(dr/dv)(v)\geq 0$ in $[v_1,v^\ast]$. Together with $r(v_1)>0$ this contradicts \eqref{eq:47}.

We look at the subcase $2\tilde{C}<\tilde{\phi}(v_1)$. Since $\tilde{\phi}$ is a monotonically decreasing function, it either drops not below $2\tilde{C}$ or it drops below $2\tilde{C}$ on $[v_1,v^\ast]$. In the latter situation there is a $v_0\in[v_1,v^\ast]$ such that $(dr/dv)(v)\geq 0$ for $v\in[v_0,v^\ast]$ and we get a contradiction in the same way as in the subcase studied above.

Therefore, we are left to study the situation in which $\tilde{\phi}$ does not drop below $2\tilde{C}$ in $[v_1,v^\ast]$. Since $\tilde{\phi}$ is a decreasing function which is bounded from below by $2\tilde{C}$, it tends to a limit as $v\rightarrow v^\ast$. We have
\begin{align}
  \label{eq:40}
  \lim_{v\rightarrow v^\ast}\tilde{\phi}(v)\geq 2\tilde{C}.
\end{align}
Now,
\begin{align}
  \label{eq:41}
  -\frac{dr}{dv}(v)&\leq \tfrac{1}{2}\tilde{\phi}(v)-\tilde{C}\notag\\
&\leq\tfrac{1}{2}\tilde{\phi}(v_1)-\tilde{C}.
\end{align}
Integrating this on $[v,v^\ast]$ for $v\in[v_1,v^\ast]$ and taking into account the assumption \eqref{eq:47}, we obtain
\begin{align}
  \label{eq:42}
  r(v)\leq \left(\tfrac{1}{2}\tilde{\phi}(v_1)-\tilde{C}\right)(v^\ast-v).
\end{align}
Using this together with the lower bound on $\eta$ we deduce
\begin{align}
  \label{eq:43}
  \int_{v_1}^v\left(\frac{\eta}{r}\right)(v')dv'&\geq \frac{\underline{\eta}}{\frac{1}{2}\tilde{\phi}(v_1)-\tilde{C}}\int_{v_1}^v\frac{dv'}{v^\ast-v'}\notag\\
&=\frac{\underline{\eta}}{\frac{1}{2}\tilde{\phi}(v_1)-\tilde{C}}\log\left(\frac{v^\ast-v_1}{v^\ast-v}\right).
\end{align}
Recalling the definition of $\tilde{\phi}$ we get
\begin{align}
  \label{eq:44}
  \tilde{\phi}(v)&=\tilde{\phi}(v_1)e^{-\int_{v_1}^v\left(\frac{\eta}{r}\right)(v')dv'}\notag\\
&\leq\tilde{\phi}(v_1)\left(\frac{v^\ast-v}{v^\ast-v_1}\right)^{\frac{\underline{\eta}}{\frac{1}{2}\tilde{\phi}(v_1)-\tilde{C}}}.
\end{align}
The right hand side tends to $0$ as $v\rightarrow v^\ast$ giving us a contradiction to \eqref{eq:40}.

Part ii). Assuming $r>\varepsilon$ the only possibilities for the system \eqref{eq:17} to blow up are
\begin{align}
  \label{eq:45}
  r\rightarrow \infty,\qquad |\beta|\rightarrow \infty.
\end{align}
Using $\chi=\alpha-\beta$, the system \eqref{eq:17} becomes (we omit the argument of $\eta$)
\begin{subequations}
\begin{align}
  \label{eq:46}
  \frac{d\chi}{du}&=\frac{\eta}{r}\chi+\frac{d\alpha}{du},\\
  \label{eq:48}
  \frac{dr}{du}&=\tfrac{1}{2}\chi-\eta.
\end{align}
\end{subequations}
Let $0<u_1<u<u^\ast$. Integrating \eqref{eq:46} on $[u_1,u]$ yields
\begin{align}
  \label{eq:49}
  \chi(u)=e^{\int_{u_1}^u\left(\frac{\eta}{r}\right)(u')du'}(\chi(u_1)+F_1(u)),
\end{align}
where
\begin{align}
  \label{eq:54}
  F_1(u)\defeq \int_{u_1}^ue^{-\int_{u_1}^{u'}\left(\frac{\eta}{r}\right)(u'')du''}\frac{d\alpha}{du}(u')du'.
\end{align}
We have
\begin{align}
  \label{eq:55}
  |F_1(u)|\leq C(u-u_1)
\end{align}
and
\begin{align}
  \label{eq:57}
  |\chi(u)|\leq Ce^{\int_{u_1}^u\left(\frac{\eta}{r}\right)(u')du'}.
\end{align}

Let us assume
\begin{align}
  \label{eq:52}
  |\chi|\rightarrow\infty \quad\textrm{as}\quad u\rightarrow u^\ast.
\end{align}
In view of the bound \eqref{eq:55} we see that there exists $u_1\in[0,u^\ast)$ such that for $u\in[u_1,u^\ast)$ we have either $\chi(u_1)+F_1(u)>0$ or $\chi(u_1)+F_1(u)<0$. Using this in \eqref{eq:49} we obtain that either $\chi(u)>0$ for $u\in[u_1,u^\ast)$ and $\chi\rightarrow \infty$ as $u\rightarrow u^\ast$ or $\chi(u)<0$ for $u\in[u_1,u^\ast)$ and $\chi\rightarrow -\infty$ as $u\rightarrow u^\ast$ respectively.

In the first case, $\chi\rightarrow\infty$ as $u\rightarrow u^\ast$ implies $\beta\rightarrow-\infty$ as $u\rightarrow u^\ast$ which in turn implies $\chi^\dagger\rightarrow-\infty$ as $u\rightarrow u^\ast$. In view of \eqref{eq:22} we then have $\lim_{u\rightarrow u^\ast}\eta(u)\leq C$. Using this together with $1/r\leq C$ we obtain that the right hand side of \eqref{eq:57} with $u=u^\ast$ is bounded. This contradicts \eqref{eq:52}.

In the second case, $\chi\rightarrow -\infty$ as $u\rightarrow u^\ast$ implies, through \eqref{eq:57} and $1/r\leq C$, that
\begin{align}
  \label{eq:58}
  \int_{u_1}^u\eta(u')du'\rightarrow \infty \quad\textrm{as}\quad u\rightarrow u^\ast.
\end{align}
Integrating \eqref{eq:48} gives
\begin{align}
  \label{eq:59}
  r(u)=r(u_1)+\int_{u_1}^u\left(\tfrac{1}{2}\chi(u')-\eta(u')\right)du'.
\end{align}
Using \eqref{eq:58} together with $\chi\rightarrow-\infty$, we see that $r(u)\rightarrow 0$ as $u\rightarrow u^\ast$, contradicting our assumption $r>0$.

Therefore $\chi$ is bounded and hence so is $\beta$, i.e.~$|\beta|\leq C$. The bounds on $\alpha$ and $\beta$ imply that also $\eta$ is bounded (see \eqref{eq:199}). In view of \eqref{eq:48} we then obtain an upper bound for $r(u)$.

\end{proof}

The above lemma shows that there is no blowup of $\alpha$, $r$ along $C^+$ and of $\beta$, $r$ along $C^-$ as long as $C^-$ does not hit the center of symmetry $r=0$, thus establishing a continuously differentiable solution of the constraint equations. In the following we show that there is also no blowup for higher order derivatives of $\alpha$, $\beta$, $t$ and $r$.

We define
\refstepcounter{equation}\label{eq:61ab}
\begin{align}
  \label{eq:61}
  \mu\defeq\pp{t}{u},\qquad \nu\defeq\pp{t}{v},\tag{\theequation a,b}
\end{align}
and
\refstepcounter{equation}\label{eq:62ab}
\begin{align}
  \label{eq:62}
  \gamma\defeq\pp{\alpha}{u},\qquad\delta\defeq\pp{\beta}{v}.\tag{\theequation a,b}
\end{align}
In the following we denote by $f_i:i=1,\ldots,10$ given continuously differentiable functions. Taking the derivative of (\ref{eq:8ab}a) with respect to $u$ and making use of (\ref{eq:8ab}b), (\ref{eq:9ab}b) we obtain the following equation along $C^+$
\begin{align}
  \label{eq:63}
  \frac{d\gamma}{dv}=f_1\frac{d\mu}{dv}+f_2\gamma+f_3\mu.
\end{align}
Taking the derivative of (\ref{eq:8ab}b) with respect to $v$ and making use of (\ref{eq:8ab}a), (\ref{eq:9ab}a) we obtain the following equation along $C^-$
\begin{align}
  \label{eq:65}
  \frac{d\delta}{du}=f_4\frac{d\nu}{du}+f_5\nu+f_6\delta.
\end{align}
Taking the derivative of (\ref{eq:9ab}a) with respect to $u$ and the derivative of (\ref{eq:9ab}b) with respect to $v$, subtracting the resulting equations from each other, we arrive at
\begin{align}
  \label{eq:64}
  \frac{\partial^2t}{\partial u\partial v}+\frac{1}{c_+-c_-}\left(\pp{c_+}{u}\nu-\pp{c_-}{v}\mu\right)=0,
\end{align}
where we omitted the arguments of $c_\pm$. \eqref{eq:64} becomes, along $C^+$, $C^-$,
\begin{align}
  \label{eq:66}
  \frac{d\mu}{dv}&=f_7\gamma+f_8\mu,\\
  \label{eq:67}
  \frac{d\nu}{du}&=f_9\delta+f_{10}\nu,
\end{align}
respectively. \eqref{eq:63}, \eqref{eq:66} constitute a system of linear equations for $\mu$, $\gamma$ which we supplement with the initial conditions $\mu(0)=1,\gamma(0)=(d\alpha^-/du)(0)$. We deduce that $\mu$, $\gamma$ do not blow up on $\mathbb{R}^+\cup\{0\}$. \eqref{eq:65}, \eqref{eq:67} constitute a system of linear equations for $\nu$, $\delta$ which we complement with the initial conditions $\nu(0)=1,\delta(0)=(d\beta^+/dv)(0)$. We deduce that $\nu$, $\delta$ do not blow up for $u\in[0,\overline{u})$. In view of \eqref{eq:9} also $\partial r/\partial u$, $\partial r/\partial v$ do not blow up along $C^+$ and $C^-$. Therefore, the derivatives of first order of $\alpha$, $\beta$, $t$ and $r$ do not blow up on $C^+$, $C^-$.

Continuing in a similar manner we see that also all derivatives of higher order satisfy linear equations along $C^+$ and $C^-$. We conclude that there is also no blowup in those quantities. We therefore have the following result which establishes characteristic initial data.

\begin{proposition}
Let $r_0>\varepsilon>0$ and let us be given free data $\beta_0^+,\alpha_0^-,t_0^+,t_0^-\in C^\infty(\mathbb{R}^+\cup\{0\})$ such that $t_+(0)=t^-(0)$. Then there exist unique smooth functions $\alpha_0^+$, $\alpha^\pm_i$, $\beta^-_0$, $\beta^\pm_i$, $t^\pm_i$, $r^\pm_0$, $r^\pm_i$, $i\geq 1$, such that $r^\pm_0(0)=r_0$ and, for $f\in\{\alpha,\beta,t,r\}$, the functions
\begin{subequations}
\begin{align}
  \label{eq:69}
  \frac{\partial^{i+j}f}{\partial u^i\partial v^j}(0,v)&=\frac{df^+_i}{dv^j}(v):i,j\geq 0,\\
  \label{eq:72}
  \frac{\partial^{i+j}f}{\partial u^j\partial v^i}(u,0)&=\frac{df^-_i}{du^j}(u):i,j\geq 0,
\end{align}
\end{subequations}
satisfy the characteristic system and derivatives of it along $C^+$, $C^-$ and the initial conditions
\refstepcounter{equation}\label{eq:71ab}
\begin{align}
  \label{eq:71}
  f^+_i(0)=\frac{d^if^-_0}{du^i}(0),\qquad f^-_i(0)=\frac{d^if^+_0}{dv^i}(0):i\geq 0.\tag{\theequation a,b}
\end{align}
$C^+$ corresponds to $\{(0,v):v\in\mathbb{R}^+\cup\{0\}\}$ and $C^-$ corresponds to $\{(u,0):u\in[0,\overline{u})\}$, where
\begin{align}
  \label{eq:73}
  \overline{u}=\sup \Big\{u'\in\mathbb{R}^+\cup\{0\}: \forall u''\in[0,u'] : r^-_0(u'')>\varepsilon\Big\}.
\end{align}
\end{proposition}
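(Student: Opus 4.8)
The plan is to prove the Proposition by induction on the order $i$ of transversal differentiation, reducing the construction at each stage to a linear system of ordinary differential equations along $C^+$ and along $C^-$. Throughout, the tangential direction on $C^+$ is $\partial_v$ and the transversal one is $\partial_u$, so that $f_i^+(v)=\partial_u^i f(0,v)$; symmetrically $f_i^-(u)=\partial_v^i f(u,0)$ on $C^-$. For the base case $i=0$, the existence of the derived data $\alpha_0^+,r_0^+$ on $C^+$ and $\beta_0^-,r_0^-$ on $C^-$ is exactly the content of the preceding Lemma, the free data supplying $\beta_0^+=\beta^+$, $\alpha_0^-=\alpha^-$, $t_0^+=v$, $t_0^-=u$. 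Since the right-hand sides of the constraint systems \eqref{eq:18}, \eqref{eq:17} are smooth functions of $(\alpha,\beta,r)$ away from $r=0$ and the free data are smooth, standard ODE regularity upgrades the $C^1$ solution of the Lemma to a smooth one, and Picard--Lindel\"of gives uniqueness. In particular all $v$-derivatives of $\alpha_0^+,r_0^+$ and all $u$-derivatives of $\beta_0^-,r_0^-$ exist, so the vertex quantities entering \eqref{eq:71ab} are well defined.

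For the inductive step I would exploit the asymmetry already visible at first order in \eqref{eq:63}, \eqref{eq:66} (along $C^+$) and \eqref{eq:65}, \eqref{eq:67} (along $C^-$). Along $C^+$ the relations (\ref{eq:8ab}b) and (\ref{eq:9ab}b) are evolution equations in $u$; differentiating them $i-1$ further times in $u$ and restricting to $u=0$ expresses $\beta_i^+$ and $r_i^+$ algebraically in terms of $t_i^+$ (entering linearly) and transversal derivatives of order $\le i-1$, which are known by the inductive hypothesis. The remaining pair $\alpha_i^+$ and $t_i^+$ is produced by differentiating (\ref{eq:8ab}a) $i$ times in $u$ and \eqref{eq:64} $(i-1)$ times in $u$ and restricting to $C^+$; the structural point to verify is that the order-$i$ quantities enter these relations linearly, so that, after substituting the algebraic expressions for $\beta_i^+,r_i^+$, one obtains a closed first-order system
\begin{align*}
\frac{d\alpha_i^+}{dv}&=a_1\frac{dt_i^+}{dv}+a_2\alpha_i^++a_3 t_i^++g_\alpha,\\
\frac{dt_i^+}{dv}&=a_4\alpha_i^++a_5 t_i^++g_t,
\end{align*}
generalizing \eqref{eq:63}, \eqref{eq:66}, whose coefficients $a_k$ and inhomogeneities $g_\alpha,g_t$ are smooth functions built from data of order $\le i-1$. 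Eliminating $dt_i^+/dv$ from the first equation yields a genuine linear system; since its coefficients are continuous on all of $[0,\infty)$ (the order-zero data being global and bounded there by part i of the Lemma), the solution exists, is unique and smooth on $[0,\infty)$, with no finite-time blowup. The symmetric construction along $C^-$ gives the linear system for $(\beta_i^-,t_i^-)$ and algebraic formulas for $\alpha_i^-,r_i^-$; here the coefficients remain continuous and bounded on $[0,\overline u)$ because $r_0^->\varepsilon$ there, so the solution exists on $[0,\overline u)$.

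The initial data for these linear systems are dictated by the corner conditions \eqref{eq:71ab}: since $\partial_u^i f(0,0)$ may be evaluated either as $f_i^+(0)$ or, restricting $f$ to $C^-$, as $\tfrac{d^i f_0^-}{du^i}(0)$, one must set $f_i^+(0)=\tfrac{d^i f_0^-}{du^i}(0)$, and symmetrically on $C^-$; both are well defined by the base-case smoothness established above. Feeding these values into the linear systems and recovering $\beta_i^+,r_i^+$ (respectively $\alpha_i^-,r_i^-$) algebraically completes the induction and delivers the full collection $\{f_i^\pm\}$, uniquely determined at every order.

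I expect the main obstacle to be twofold. First is the bookkeeping that establishes the linear-in-top-order structure of the differentiated characteristic system, so that the hierarchy genuinely closes at each order and does not couple to derivatives of order $i+1$; this is the heart of the argument, after which linear ODE theory does the rest. Second, and more delicate, is verifying full corner compatibility, namely that the vertex values $\tfrac{d^j}{dv^j}f_i^+(0)$ and $\tfrac{d^i}{du^i}f_j^-(0)$ agree for all $i,j$, both representing $\partial_u^i\partial_v^j f(0,0)$. I would handle this by a secondary induction, showing that the differentiated equations together with \eqref{eq:71ab} force the two evaluations to coincide; this consistency is precisely what makes $\{f_i^\pm\}$ a coherent set of characteristic data rather than two independently posed hierarchies.
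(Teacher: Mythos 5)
Your proposal is correct and follows essentially the same route as the paper: the base case is the preceding Lemma, and higher-order transversal derivatives are shown to satisfy linear ODE systems along $C^+$ and $C^-$ (the paper's \eqref{eq:63}, \eqref{eq:66} and \eqref{eq:65}, \eqref{eq:67} at first order, with higher orders handled "in a similar manner"), whose global solvability with bounded continuous coefficients gives the result. You merely make explicit two points the paper leaves implicit — the algebraic determination of $\beta_i^+, r_i^+$ from the $u$-evolution equations and the corner-compatibility of the two hierarchies — which is a faithful elaboration rather than a different argument.
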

\begin{remark}
  It would suffice to give $\alpha^-_0$, $t^-_0$ on $[0,\overline{u})$.
\end{remark}

\section{Characteristic Initial Value Problem}
In the following we establish local existence of a solution to the characteristic initial value problem. We assume that, after giving free data, the constraint equations have been solved and in the following we are going to make use of this solution without further reference. We focus on establishing the solution in a region adjacent to $C^+$. Establishing the solution in a region adjacent to $C^-$ is analogous. Let us be given data along $C^+$ up to $v=v^\ast$ and along $C^-$ up to $u=u^\ast$.

\subsection{Solution in the Corner}
We define
\begin{align}
  \label{eq:11}
  I_a&\defeq \left\{(0,v)\in\mathbb{R}^2:0\leq v\leq a\right\},\\
  \Pi_{ab}&\defeq\left\{(u,v)\in\mathbb{R}^2:0\leq u\leq a,0\leq v\leq b\right\},
\end{align}
(see figure \ref{domainpiab}).

\begin{figure}[h!]
\begin{center}
\begin{tikzpicture}
\filldraw [gray!30] (0,0) -- (-0.6,0.6) --(0.4,1.6) -- (1,1);
\draw [->](0,0) -- (-2,2);
\draw [->](0,0) -- (2,2);
\node at (-2.2,2.2) {$u$};
\node at (2.2,2.2) {$v$};
%\draw (-0.2,0.2) -- (1.5,1.9) node[midway,sloped,above] {$u=h$};
\draw (0.4,1.6) -- (1,1) ;
\draw (-0.6,0.6) --(0.4,1.6);
\draw [dashed] (0.4,1.6) -- (-1,3) node[midway,sloped,above] {$v=a$};
\draw [dashed] (0.4,1.6) -- (1.8,3) node[midway,sloped,above] {$u=b$};
\node at (-1.4,0.8) {$C^-$};
\node at (1.4,0.8) {$C^+$};
\node at (0.25,0.8) {$\Pi_{ab}$};
\end{tikzpicture}
\end{center}
\caption{The domain $\Pi_{ab}$}
\label{domainpiab}
\end{figure}
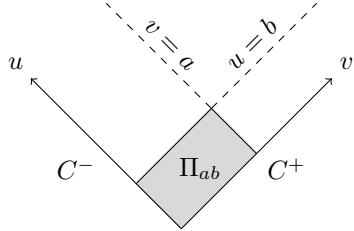

Recall that $\nu(0,v)=1$. Let
\begin{align}
  \label{eq:26}
  a_0\defeq \max_{I_{v^\ast}}|\alpha|,\qquad b_0\defeq \max_{I_{v^\ast}}|\beta|,\qquad d_0\defeq \max_{I_{v^\ast}}|\delta|
\end{align}
and let
\begin{align}
  \label{eq:14}
r_m\defeq \min_{I_{v^\ast}}r,\qquad r_M\defeq \max_{I_{v^\ast}}r.
\end{align}
Let $l>1$ and let
\begin{align}
  \label{eq:82}
  A\defeq la_0,\qquad B\defeq lb_0,\qquad D\defeq ld_0.
\end{align}
Let us choose $0<h<u^\ast$ such that for $u\in[0,h]$ we have
\begin{align}
  \label{eq:151}
  |\alpha(u,0)|<A,\qquad |\beta(u,0)|<B,\qquad |\delta(u,0)|<D,
\end{align}
\begin{align}
  \label{eq:152}
  \tfrac{1}{2}r_m<r(u,0)<\tfrac{3}{2}r_M.
\end{align}
Recall that $\mu(u,0)=1$. In the following we generalize our discussion to the case $\mu(u,0)\neq 1$, the case $\mu(u,0)=1$ being a trivial subcase. Let
\begin{align}
  \label{eq:76}
  m_0\defeq \sup_{u\in[0,h]}|\mu(u,0)|,\qquad g_0\defeq\sup_{u\in[0,h]}|\gamma(u,0)|.
\end{align}
Let
\begin{align}
  \label{eq:50}
  R_l&\defeq \left\{(\alpha,\beta)\in\mathbb{R}^2:|\alpha|\leq A,|\beta|\leq B\right\},\\
  \label{eq:3}
\Omega_l&\defeq R_l\times \left[\tfrac{1}{2}r_m,\tfrac{3}{2}r_M\right].
\end{align}

In the following we establishes bounds on $|\gamma|$, $|\mu|$ once bounds for all other quantities have been established.

Making use of the characteristic system we obtain
\begin{subequations}
\begin{align}
  \label{eq:150}
  \pp{\gamma}{v}&=A_1\nu\gamma+B_1\mu\delta+C_1\mu\nu,\\
  \pp{\mu}{v}&=A_2\nu\gamma+B_2\mu\delta+C_2\mu\nu,
\end{align}
\end{subequations}
where
\begin{align}
  \label{eq:153}
  A_1&=\frac{1}{r}\left\{(\alpha-\beta)(\tfrac{1}{4}-\tfrac{1}{2}\eta')-\eta\right\},\\
  B_1&=\frac{\alpha-\beta}{2r}(\tfrac{1}{2}+\eta'),\\
  C_1&=\frac{\eta(\alpha-\beta)}{r^2}(\alpha-\beta-2\eta),\\
  A_2=B_2&=-\frac{1}{2\eta}(\tfrac{1}{2}+\eta'),\\
  C_2&=\frac{\alpha-\beta}{r}(\eta'-\tfrac{1}{2})
\end{align}
and we denote $\eta'\defeq d\eta/d\chi^\dagger$ (see \eqref{eq:23}). Defining
\begin{align}
  \label{eq:154}
  a_1&\defeq A_1\nu,\qquad b_1\defeq B_1\delta+C_1\nu,\\
  b_2&\defeq A_2\nu,\qquad a_2\defeq B_2\delta+C_2\nu,
\end{align}
we arrive at
\begin{align}
  \label{eq:155}
  \pp{\gamma}{v}&=a_1\gamma+b_1\mu,\\
  \label{eq:157}
  \pp{\mu}{v}&=a_2\mu+b_2\gamma.
\end{align}
Now we define
\begin{align}
  \label{eq:156}
  Q_i\defeq \sup_{\stackrel{(\alpha,\beta,r)\in\Omega_l}{|\nu|\leq l}}|a_i|,\qquad S_i\defeq \sup_{\stackrel{(\alpha,\beta,r)\in\Omega_l}{|\nu|\leq l,|\delta|\leq D}}|b_i|,\qquad i=1,2.
\end{align}
Then, the system \eqref{eq:155}, \eqref{eq:157} implies
\begin{align}
  \label{eq:158}
  |\gamma(u,v)|&\leq e^{vQ_1}\left\{|\gamma(u,0)|+S_1\int_0^v|\mu|(u,v')dv'\right\},\\
|\mu(u,v)|&\leq e^{vQ_2}\left\{|\mu(u,0)|+S_2\int_0^v|\gamma|(u,v')dv'\right\}.
\end{align}
Substituting one of these equations into the other and making use of Gronwall's inequality we obtain
\begin{align}
  \label{eq:159}
  |\gamma(u,v)|&\leq f_1(v)|\gamma(u,0)|+f_2(v)|\mu(u,0)|,\\
  |\mu(u,v)|&\leq f_3(v)|\mu(u,0)|+f_4(v)|\gamma(u,0)|,
\end{align}
where
\begin{align}
  \label{eq:162}
  f_1(v)&=e^{vQ_1}\left\{1+vS_1S_2e^{v(Q_1+Q_2)}\int_0^ve^{-S_1S_2\int_{v'}^vv''e^{v''(Q_1+Q_2)}dv''}dv'\right\},\\
  f_2(v)&=vS_1e^{vQ_2}f_1(v)
\end{align}
and analogous expressions hold for $f_3$, $f_4$. We define
\begin{align}
  \label{eq:192}
  \overline{f}_1(v)&\defeq e^{vQ_1}\left\{1+v^2S_1S_2e^{v(Q_1+Q_2)}\right\},\\
  \overline{f}_2(v)&\defeq vS_1e^{vQ_2}\overline{f}_1(v)
\end{align}
and analogously we define $\overline{f}_3$, $\overline{f}_4$. We have
\begin{align}
  \label{eq:193}
  f_i\leq \overline{f}_i:i=1,\ldots,4.
\end{align}
We note that $\overline{f}_i:i=1,\ldots,4$ are strictly increasing functions satisfying
\begin{align}
  \label{eq:160}
  \lim_{v\rightarrow 0}\overline{f}_i=1:i=1,3,\qquad \lim_{v\rightarrow 0}\overline{f}_k=0:k=2,4.
\end{align}
We make the definitions
\begin{align}
  \label{eq:161}
  G\defeq \overline{f}_1(v^\ast)g_0+\overline{f}_2(v^\ast)m_0,\\
    \label{eq:4}
  M\defeq \overline{f}_3(v^\ast)g_0+\overline{f}_4(v^\ast)m_0.
\end{align}
We have proved the following:
\begin{lemma}\label{lemma_gamma_mu}
Let us be given a solution of the characteristic system in $\Pi_{uv}$ for $u\in(0,h]$, $v\in(0,v^\ast]$ such that this solution satisfies the bounds
\begin{align}
  \label{eq:53}
  |\nu|\leq l,\qquad |\alpha|\leq A,\qquad |\beta|\leq B,\qquad |\delta|\leq D,\qquad \tfrac{1}{2}r_m\leq r\leq \tfrac{3}{2}r_M.
\end{align}
Then the constants $G$, $M$ given by \eqref{eq:161}, \eqref{eq:4} respectively, satisfy $G> g_0$, $M> m_0$ and we have
\begin{align}
  \label{eq:56}
  |\gamma|\leq G,\qquad |\mu|\leq M,\qquad
\end{align}
with strict inequalities for $v<v^\ast$. The constants $G$, $M$ depend on $A$, $B$, $D$, $l$, $r_m$, $r_M$, $g_0$, $m_0$.
\end{lemma}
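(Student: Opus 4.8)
The plan is to read this statement as an a priori estimate with a built-in gain, and to show that the entire derivation \eqref{eq:150}--\eqref{eq:4} preceding the lemma is exactly its proof. The hypotheses \eqref{eq:53} are precisely the bounds that place $(\alpha,\beta,r)$ in the box $\Omega_l$ of \eqref{eq:3} and keep $|\nu|\le l$, $|\delta|\le D$ throughout $\Pi_{uv}$, which is what is needed to replace the coefficient functions $a_i$, $b_i$ of the linear system \eqref{eq:155}, \eqref{eq:157} by the constant majorants $Q_i$, $S_i$ of \eqref{eq:156}. So the first step is simply to record that, under \eqref{eq:53}, one has $|a_i|\le Q_i$ and $|b_i|\le S_i$ pointwise on $\Pi_{uv}$ for $i=1,2$, since the suprema defining $Q_i$, $S_i$ are taken exactly over the set in which the solution now lives.

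With these pointwise bounds in hand, I would fix $u$ and regard \eqref{eq:155}, \eqref{eq:157} as a coupled pair of linear differential inequalities in $v$. Integrating each in $v$ from $0$ and using the constant coefficient bounds yields the coupled integral inequalities \eqref{eq:158}. The genuinely non-routine step is to decouple this pair: substitute the inequality for $|\mu|$ into the one for $|\gamma|$ and conversely, and apply Gronwall's inequality in its integral form to close the feedback loop, producing the explicit kernels $f_1,\dots,f_4$ of \eqref{eq:162} and the estimates \eqref{eq:159}. Because the exact $f_i$ are awkward, I would immediately pass to the cruder monotone majorants $\overline{f}_i$ of \eqref{eq:192} via $f_i\le\overline{f}_i$ from \eqref{eq:193}, bound the data by $|\gamma(u,0)|\le g_0$ and $|\mu(u,0)|\le m_0$ from \eqref{eq:76}, and use monotonicity of the $\overline{f}_i$ to evaluate at the endpoint $v^\ast$. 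This gives exactly $|\gamma|\le G$ and $|\mu|\le M$ with $G$, $M$ as in \eqref{eq:161}, \eqref{eq:4}.

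The strict inequalities follow from the qualitative behaviour of the majorants recorded in \eqref{eq:160}. For $v<v^\ast$, strict monotonicity of each $\overline{f}_i$ gives $\overline{f}_i(v)<\overline{f}_i(v^\ast)$, so the bounds $|\gamma|<G$ and $|\mu|<M$ are strict there. For the separation at the endpoint, the point is that the two \emph{diagonal} majorants increase from the limit $1$ while the two \emph{off-diagonal} ones increase from $0$; hence at $v^\ast>0$ the diagonal factor multiplying the own data exceeds $1$ and the off-diagonal term is strictly positive. Using $m_0>0$ (indeed $\mu(\cdot,0)=1$ in the base case, and more generally $\mu=\partial t/\partial u>0$ since $t$ is increasing) this forces $G>g_0$ and $M>m_0$. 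Finally, reading off which quantities enter $Q_i$, $S_i$, and hence the $\overline{f}_i$ and the evaluation at $v^\ast$, confirms the asserted dependence of $G$, $M$ on $A,B,D,l,r_m,r_M,g_0,m_0$.

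The main obstacle is the Gronwall decoupling in the second paragraph: the two estimates in \eqref{eq:158} are genuinely coupled through $S_1$ and $S_2$, and one must substitute one into the other and invoke the integral form of Gronwall carefully to extract the closed-form kernels \eqref{eq:162}. Everything surrounding it is bookkeeping, and the strict-inequality claims reduce to the elementary monotonicity and limit facts \eqref{eq:160}.
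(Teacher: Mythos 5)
Your proposal is correct and follows essentially the same route as the paper: the paper's proof of this lemma is precisely the derivation \eqref{eq:150}--\eqref{eq:4} preceding the statement (pointwise coefficient bounds $|a_i|\leq Q_i$, $|b_i|\leq S_i$ under \eqref{eq:53}, the coupled integral inequalities \eqref{eq:158}, the substitution-plus-Gronwall decoupling yielding \eqref{eq:159}, \eqref{eq:162}, and the monotone majorants $\overline{f}_i$ evaluated at $v^\ast$), and your treatment of the strict inequalities and of $G>g_0$, $M>m_0$ via \eqref{eq:160} and $m_0>0$ matches the intended reading. No gaps.
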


From the characteristic system we have
\begin{align}
  \label{eq:167}
  \alpha(u,v)&=\alpha(u,0)+\int_0^v(\nu F)(u,v')dv',\\
  \label{eq:165}
  \beta(u,v)&=\beta(0,v)+\int_0^u(\mu F)(u',v)du'.
\end{align}
Introducing the two functions
\begin{align}
  \label{eq:168}
  K\defeq \frac{1}{c_+-c_-}\pp{c_+}{u},\qquad L\defeq \frac{1}{c_+-c_-}\pp{c_-}{v},
\end{align}
with $c_\pm=c_\pm(\alpha,\beta)$ and 
\begin{align}
  \label{eq:172}
  \pp{c_+}{u}=\pp{c_+}{\alpha}(\alpha,\beta)\pp{\alpha}{u}+\pp{c_+}{\beta}(\alpha,\beta)\pp{\beta}{u}
\end{align}
and analogous for $\partial c_-/\partial v$, we can rewrite \eqref{eq:64} as
\begin{align}
  \label{eq:169}
  \pppp{t}{u}{v}+K\nu-L\mu=0.
\end{align}

We now construct a solution of the characteristic system as the limit of a sequence of functions $((\alpha_n,\beta_n,t_n,r_n);n=0,1,2,\ldots)$ defined on $\Pi_{h\varepsilon}$. The sequence is generated by the following iteration. We first define $(\alpha_0,\beta_0)$ setting
\begin{align}
  \label{eq:170}
  \alpha_0(u,v)=\alpha(u,0),\qquad \beta_0(u,v)=\beta(0,v),
\end{align}
the right hand sides being given by the initial data on $C^-$, $C^+$ respectively. We then define $t_0(u,v)$ to be the solution of (see \eqref{eq:169})
\begin{align}
  \label{eq:60}
  \frac{\partial^2t_0}{\partial u\partial v}+K_0\nu_0-L_0\mu_0=0,
\end{align}
together with the initial data $t(u,0)$ and $t(0,v)=v$. We then define (see (\ref{eq:9ab}b))
\begin{align}
  \label{eq:68}
  r_0(u,v)=r(0,v)+\int_0^u(\mu_0c_{-0})(u',v)du'.
\end{align}
Then, given the iterate $(\alpha_n,\beta_n)$ we define the next iterate $(\alpha_{n+1},\beta_{n+1})$ according to the following. We define $t_n$ to be the solution of \eqref{eq:60} with $n$ in the role of $0$. Then we define $r_n$ to be the solution of \eqref{eq:68} with $n$ in the role of $0$. Then we find the next iterate $(\alpha_{n+1},\beta_{n+1})$ according to (see \eqref{eq:167}, \eqref{eq:165})
\begin{align}
  \label{eq:70}
  \alpha_{n+1}(u,v)&=\alpha(u,0)+\int_0^v(\nu_nF_n)(u,v')dv',\\
  \label{eq:102}
  \beta_{n+1}(u,v)&=\beta(0,v)+\int_0^u(\mu_nF_n)(u',v)du',
\end{align}
where
\begin{align}
  \label{eq:77}
  F_n=-\frac{\eta(\alpha_n,\beta_n)}{r_n}(\alpha_n-\beta_n).
\end{align}

In the following we are going to work with the differences
\begin{align}
  \label{eq:75}
  \alpha_n'(u,v)&\defeq \alpha_n(u,v)-\alpha(u,0),\\
  \beta_n'(u,v)&\defeq \beta_n(u,v)-\beta(0,v).
\end{align}
We note that $\alpha'_n(u,0)=\beta'_n(0,v)=0$. Let (for the definition of $\Omega_l$ see \eqref{eq:3})
\begin{align}
  \label{eq:74}
  \overline{F}\defeq \sup_{\Omega_l}|F|.
\end{align}

\begin{lemma}\label{lemma_ind}
  Consider the closed set $\mathcal{C}$ in the space $C^1(\Pi_{h\varepsilon},\mathbb{R}^2)$ of continuously differentiable maps $(u,v)\mapsto (\alpha',\beta')(u,v)$ of $\Pi_{h\varepsilon}$ into $\mathbb{R}^2$ defined by the conditions
  \begin{align}
    \label{eq:51}
    \alpha'(u,0)=\beta'(0,v)=0
  \end{align}
and the inequalities
  \begin{align}
    \label{eq:21}
    \left|\frac{\partial\alpha'}{\partial u}\right|\leq G-g_0,\qquad    \left|\frac{\partial\alpha'}{\partial v}\right|\leq l\overline{F},\qquad \left|\frac{\partial\beta'}{\partial u}\right|\leq M\overline{F},\qquad \left|\frac{\partial\beta'}{\partial v}\right|\leq D-d_0.
  \end{align}
If $h$ and $\varepsilon$ are sufficiently small, then the sequence $((\alpha'_n,\beta'_n);n=0,1,2,\ldots)$ is contained in $\mathcal{C}$.
\end{lemma}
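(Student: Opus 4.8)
The plan is to argue by induction on $n$, showing that membership in $\mathcal{C}$ is preserved by the iteration once $h$ and $\varepsilon$ are chosen small enough. The base case is immediate: $\alpha_0'\equiv\beta_0'\equiv 0$ by \eqref{eq:170}, so all four derivatives vanish and the inequalities \eqref{eq:21} hold, using $G>g_0$ and $D>d_0$ (the latter since $D=ld_0$ with $l>1$) from Lemma \ref{lemma_gamma_mu}. For the inductive step I assume $(\alpha_n',\beta_n')\in\mathcal{C}$ and must produce the four bounds \eqref{eq:21} for $(\alpha_{n+1}',\beta_{n+1}')$. The boundary conditions \eqref{eq:51} are automatic from \eqref{eq:70}, \eqref{eq:102}, since at $v=0$ and $u=0$ the defining integrals vanish, and the required $C^1$ regularity follows because every integrand appearing below is continuous.

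First I would convert the derivative bounds \eqref{eq:21} at level $n$ into the $C^0$ control needed to place the $n$-th iterate into $\Omega_l$. Integrating $\partial_v\alpha_n'$ in $v$ and $\partial_u\beta_n'$ in $u$ from the zero boundary data \eqref{eq:51} gives $|\alpha_n'|\le l\overline{F}\varepsilon$ and $|\beta_n'|\le M\overline{F}h$. Combined with the strict inequalities \eqref{eq:151} on the initial data (which leave a positive gap below $A,B$ after shrinking $h,\varepsilon$), this yields $(\alpha_n,\beta_n)\in R_l$. On $R_l$ the quantities $c_{\pm n}$, $\eta_n$ and $c_{+n}-c_{-n}=2\eta_n$ are bounded, with $\eta_n$ bounded away from $0$, so $F_n$ in \eqref{eq:77} and the coefficients $K_n,L_n$ of \eqref{eq:168} are bounded. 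Crucially, $K_n,L_n$ depend only on $(\alpha_n,\beta_n)$ and their first derivatives $\gamma_n=\gamma(u,0)+\partial_u\alpha_n'$, $\delta_n=\delta(0,v)+\partial_v\beta_n'$, $\partial_v\alpha_n'$, $\partial_u\beta_n'$, all controlled by \eqref{eq:21} and the data bounds, hence by fixed constants independent of $n$.

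The crux is the uniform control of $\mu_n=\partial t_n/\partial u$ and $\nu_n=\partial t_n/\partial v$. Since $\partial^2 t_n/\partial u\partial v=L_n\mu_n-K_n\nu_n$ by \eqref{eq:60}, and using $t_n(u,0)=t(u,0)$, $t_n(0,v)=v$, I would write the coupled integral system
\begin{align}
  \mu_n(u,v)&=\mu(u,0)+\int_0^v\big(L_n\mu_n-K_n\nu_n\big)(u,v')\,dv',\notag\\
  \nu_n(u,v)&=1+\int_0^u\big(L_n\mu_n-K_n\nu_n\big)(u',v)\,du'.\notag
\end{align}
Because $K_n,L_n$ are bounded by a fixed constant $\kappa$ independent of $\mu_n,\nu_n$, a Gronwall argument on $\sup|\mu_n|+\sup|\nu_n|$ gives $\sup|\nu_n|\le 1+C h$ and $\sup|\mu_n|\le m_0+C\varepsilon$; since $l>1$ and $M>m_0$, for $h,\varepsilon$ small this yields $|\nu_n|\le l$ and $|\mu_n|\le M$ on $\Pi_{h\varepsilon}$. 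I would then bound $r_n$ from \eqref{eq:68} by $|r_n-r(0,v)|\le M\,(\sup|c_-|)\,h$, so that \eqref{eq:152} and smallness of $h$ force $\tfrac12 r_m\le r_n\le\tfrac32 r_M$; together with $(\alpha_n,\beta_n)\in R_l$ this gives $(\alpha_n,\beta_n,r_n)\in\Omega_l$ and hence $|F_n|\le\overline{F}$ via \eqref{eq:74}.

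With these in hand the four bounds close. From \eqref{eq:70}, \eqref{eq:102} one has $\partial\alpha_{n+1}'/\partial v=\nu_nF_n$ and $\partial\beta_{n+1}'/\partial u=\mu_nF_n$, so $|\partial\alpha_{n+1}'/\partial v|\le l\overline{F}$ and $|\partial\beta_{n+1}'/\partial u|\le M\overline{F}$, matching the second and third inequalities of \eqref{eq:21} exactly. For the remaining two, $\partial\alpha_{n+1}'/\partial u=\int_0^v\partial_u(\nu_nF_n)\,dv'$ and $\partial\beta_{n+1}'/\partial v=\int_0^u\partial_v(\mu_nF_n)\,du'$; differentiating through and using $\partial_u\nu_n=\partial_v\mu_n=L_n\mu_n-K_n\nu_n$, the bounds on $\gamma_n,\delta_n,\partial_u\beta_n',\partial_v\alpha_n'$, and $\partial r_n/\partial u=\mu_nc_{-n}$ from \eqref{eq:68}, the integrands are bounded by a fixed constant, giving $|\partial\alpha_{n+1}'/\partial u|\le C\varepsilon$ and $|\partial\beta_{n+1}'/\partial v|\le Ch$; these are $\le G-g_0$ and $\le D-d_0$ once $\varepsilon,h$ are small, using $G>g_0$ and $D>d_0$. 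I expect the genuine obstacle to be the third step: securing $|\nu_n|\le l$ and $|\mu_n|\le M$ uniformly in $n$. The decisive point is that \eqref{eq:60} is a \emph{linear} second-order equation for $t_n$ whose coefficients $K_n,L_n$ are frozen at level $n$ and depend only on $(\alpha_n,\beta_n)$ and their first derivatives, so the integral system for $(\mu_n,\nu_n)$ has coefficients bounded independently of its own solution, and the factors of $h,\varepsilon$ multiplying the integrals keep $\nu_n$ near $1<l$ and $\mu_n$ near $m_0<M$. Coordinating a single $(h,\varepsilon)$ that simultaneously secures $\Omega_l$-membership, the $(\mu_n,\nu_n)$ bounds, and the smallness inequalities $C\varepsilon\le G-g_0$, $Ch\le D-d_0$ — with all constants independent of $n$ — is the part requiring care.
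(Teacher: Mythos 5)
Your proposal is correct and follows essentially the same route as the paper: assume $(\alpha_n',\beta_n')\in\mathcal{C}$, place $(\alpha_n,\beta_n,r_n)$ in $\Omega_l$ via the $\mathcal{C}$-derivative bounds and smallness of $h,\varepsilon$, control $(\mu_n,\nu_n)$ through the linear integral system coming from \eqref{eq:60} with coefficients $K_n,L_n$ bounded independently of $(\mu_n,\nu_n)$, and then close the four inequalities \eqref{eq:21}, the $v$- and $u$-derivatives exactly and the $u$- and $v$-derivatives via the gaps $G-g_0$, $D-d_0$. The only differences are cosmetic (a plain absorption/Gronwall bound on $\sup|\mu_n|+\sup|\nu_n|$ in place of the paper's integrating-factor representation and explicit majorants $F_1,F_2$, and a single integral representation for $r_n$ instead of two), so no further comparison is needed.
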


The following proof is given in some detail in order to arrive at the explicit smallness conditions on $h$ which will then be used again in the bootstrap argument where we continue the solution (see below).
\begin{proof}
Let $(\alpha'_n,\beta'_n)\in\mathcal{C}$. We have
\begin{align}
  \label{eq:81}
  |\alpha_n(u,v)|&\leq|\alpha(u,0)|+\int_0^v\left|\pp{\alpha_n'}{v}\right|(u,v')dv'\notag\\
&\leq \sup_{u\in[0,h]}|\alpha(u,0)|+vl\overline{F},\\
  \label{eq:188}
  |\beta_n(u,v)|&\leq|\beta(0,v)|+\int_0^u\left|\pp{\beta_n'}{u}\right|(u',v)du'\notag\\
&\leq b_0+uM\overline{F}.
\end{align}
On the other hand
\begin{align}
  \label{eq:115}
  |\alpha_n(u,v)|&\leq |\alpha(0,v)|+\int_0^u\left|\pp{\alpha_n'}{u}(u',v)+\pp{\alpha}{u}(u',0)\right|du'\notag\\
&\leq a_0+uG,\\
  |\beta_n(u,v)|&\leq|\beta(u,0)|+\int_0^v\left|\pp{\beta'_n}{v}(u,v')+\pp{\beta}{v}(0,v')\right|dv'\notag\\
&\leq \sup_{u\in[0,h]}|\beta(u,0)|+vD.
\end{align}
Therefore, if we choose $\varepsilon$ and $h$ sufficiently small such that
\begin{align}
  \label{eq:83}
  \varepsilon&\leq\min\left\{\frac{B-\sup_{u\in[0,h]}|\beta(u,0)|}{D},\frac{A-\sup_{u\in[0,h]}|\alpha(u,0)|}{l\overline{F}}\right\},\\
  \label{eq:194}
  h&\leq(l-1)\min\left\{\frac{a_0}{G},\frac{b_0}{M\overline{F}}\right\},
\end{align}
we have that $(\alpha_n,\beta_n)\in R_l$.

For the following discussion of $\mu$ and $\nu$ we omit the index $n$ since it would be the only index appearing. We consider \eqref{eq:60} with $n$ in the role of $0$. Integrating with respect to $v$ and $u$ we obtain (recall that $\nu(0,v)=1$)
\begin{align}
  \label{eq:84}
  \mu(u,v)&=\mu(u,0)e^{\int_0^vL(u,v')dv'}-\int_0^ve^{\int_{v'}^vL(u,v'')dv''}\left(K\nu\right)(u,v')dv',\\
  \label{eq:121}
  \nu(u,v)&=e^{-\int_0^uK(u',v)du'}+\int_0^ue^{-\int_{u'}^uK(u'',v)du''}\left(L\mu\right)(u',v)du'.
\end{align}
We define
\begin{align}
  \label{eq:85}
  C_{\pm \alpha}\defeq \sup_{R_l}\left|\pp{c_\pm}{\alpha}\right|,\qquad C_{\pm \beta}\defeq \sup_{R_l}\left|\pp{c_\pm}{\beta}\right|,\qquad C_{+-}\defeq\sup_{R_l}\frac{1}{c_+-c_-}.
\end{align}
We have
\begin{align}
  \label{eq:86}
|L|&=\frac{1}{c_+-c_-}\left|\pp{c_-}{v}\right|\notag\\
&\leq C_{+-}\left(C_{-\alpha}l\overline{F}+C_{-\beta}D\right)\defeq \overline{L},\\
  \label{eq:109}
  |K|&=\frac{1}{c_+-c_-}\left|\pp{c_+}{u}\right|\notag\\
&\leq C_{+-}\left(C_{+\alpha}G+C_{+\beta}M\overline{F}\right)\defeq \overline{K}.
\end{align}
Therefore,
\begin{align}
  \label{eq:87}
  |\mu(u,v)|&\leq e^{v\overline{L}}\left(m_0+\overline{K}\int_0^v|\nu|(u,v')dv'\right),\\
  \label{eq:88}
  |\nu(u,v)|&\leq e^{u\overline{K}}\left(1+\overline{L}\int_0^u|\mu|(u',v)du'\right).
\end{align}
Substituting \eqref{eq:87} into \eqref{eq:88} we obtain
\begin{align}
  \label{eq:89}
  T(u,v)\leq f_1+f_2\int_0^vT(u,v')dv',
\end{align}
where
\begin{align}
  \label{eq:91}
  T(u,v)\defeq \sup_{u'\in[0,u]}|\nu(u',v)|,
\end{align}
\begin{align}
  \label{eq:90}
  f_1(u,v)\defeq e^{u\overline{K}}\left(1+u\overline{L}e^{v\overline{L}}M\right),\qquad f_2(u,v)\defeq u\overline{K}\overline{L}e^{u\overline{K}+v\overline{L}}.
\end{align}
Defining
\begin{align}
  \label{eq:92}
  \Sigma_u(v)\defeq \int_0^vT(u,v')dv',
\end{align}
\eqref{eq:89} yields
\begin{align}
  \label{eq:93}
  \frac{d\Sigma_u}{dv}(v)\leq f_1(u,v)+f_2(u,v)\Sigma_u(v),
\end{align}
which implies
\begin{align}
  \label{eq:94}
  \Sigma_u(v)\leq \int_0^vf_1(u,v')e^{\int_{v'}^vf_2(u,v'')dv''}dv'.
\end{align}
Using this in \eqref{eq:89} yields (putting back the index $n$)
\begin{align}
  \label{eq:95}
  |\nu_n(u,v)|\leq f_1(u,v)+f_2(u,v)\int_0^vf_1(u,v')e^{\int_{v'}^vf_2(u,v'')dv''}dv'\defeq F_1(u,v).
\end{align}
Substituting this into \eqref{eq:87} gives
\begin{align}
  \label{eq:96}
  |\mu_n(u,v)|\leq e^{v\overline{L}}\left\{m_0+\overline{K}\int_0^v F_1(u,v')dv'\right\}\defeq F_2(u,v).
\end{align}
We note that
\begin{align}
  \label{eq:122}
  \lim_{u\rightarrow 0}F_1=1,\qquad \lim_{v\rightarrow 0}F_2=m_0.
\end{align}
Now we choose $\varepsilon$, $h$ sufficiently small such that (recall that $m_0<M$, $1<l$)
\refstepcounter{equation}\label{eq:100ab}
\begin{align}
  \label{eq:100}
  F_2\leq M,\qquad F_1\leq l,\tag{\theequation a,b}
\end{align}
respectively, which implies
\begin{align}
  \label{eq:78}
  |\nu_n(u,v)|\leq 1,\qquad |\mu_n(u,v)|\leq M.
\end{align}

Now we look at \eqref{eq:68} with $n$ in the role of $0$ which is
\begin{align}
  \label{eq:118}
  r_n(u,v)=r(u,0)+\int_0^v(\nu_n c_{+n})(u,v')dv'.
\end{align}
This together with \eqref{eq:60} with $n$ in the role of $0$ gives
\begin{align}
  \label{eq:119}
  r_n(u,v)=r(0,v)+\int_0^u(\mu_n c_{-n})(u',v)du'.
\end{align}
Let
\begin{align}
  \label{eq:110}
  c_\pm^\dagger\defeq \sup_{R_l}|c_\pm|.
\end{align}
If we choose $\varepsilon$ and $h$ such that
\begin{align}
  \label{eq:97}
  \varepsilon&\leq \frac{1}{c_+^\dagger l}\min\left\{\inf_{u\in[0,h]}r(u,0)-\tfrac{1}{2}r_m\,\,,\,\,\tfrac{3}{2}r_M-\sup_{u\in[0,h]}r(u,0)\right\},\\
  \label{eq:195}
 h&\leq \frac{r_m}{2c_-^\dagger M},
\end{align}
we obtain
\begin{align}
  \label{eq:98}
  \tfrac{1}{2}r_m\leq r_n\leq\tfrac{3}{2}r_M,
\end{align}
hence $(\alpha_n,\beta_n,r_n)\in\Omega_l$, which implies
\begin{align}
  \label{eq:99}
  |F_n|\leq\overline{F}.
\end{align}

In view of \eqref{eq:70}, \eqref{eq:102}, we have
\begin{align}
  \label{eq:101}
  \left|\pp{\alpha'_{n+1}}{v}\right|\leq l\overline{F},\qquad \left|\pp{\beta'_{n+1}}{u}\right|\leq M\overline{F},
\end{align}
respectively. From \eqref{eq:70}, \eqref{eq:102} we have
\begin{align}
  \label{eq:103}
  \pp{\alpha'_{n+1}}{u}(u,v)&=\int_0^v\left(\nu_n\pp{F_n}{u}+\pppp{t_n}{u}{v}F_n\right)(u,v')dv',\\
  \label{eq:189}
  \pp{\beta_{n+1}'}{v}(u,v)&=\int_0^u\left(\mu_n\pp{F_n}{v}+\pppp{t_n}{u}{v}F_n\right)(u',v)du'.
\end{align}
In view of \eqref{eq:86}, \eqref{eq:109}, \eqref{eq:78} we see that
\begin{align}
  \label{eq:108}
  \left|\pppp{t}{u}{v}\right|\leq M\overline{L}+l\overline{K}.
\end{align}
Let
\begin{align}
  \label{eq:104}
  F_f&\defeq \sup_{\Omega_l}\left|\pp{F}{f}\right|:f\in\{\alpha,\beta,r\}.
\end{align}
We have
\begin{align}
  \label{eq:79}
  \left|\nu_n\pp{F_n}{u}+\pppp{t_n}{u}{v}F_n\right|&\leq l\left(F_\alpha G+F_\beta M\overline{F}+F_r c_-^\dagger M\right)+\overline{F}(M\overline{L}+l\overline{K})\defeq H_1,\\
  \label{eq:196}
  \left|\mu_n\pp{F_n}{v}+\pppp{t_n}{u}{v}F_n\right|&\leq M\left(F_\alpha l\overline{F}+F_\beta D+F_r c_+^\dagger l\right)+\overline{F}(M\overline{L}+l\overline{K})\defeq H_2.
\end{align}
Choosing $\varepsilon$, $h$ sufficiently small such that (recall that $G>g_0$)
\refstepcounter{equation}\label{eq:106ab}
\begin{align}
  \label{eq:106}
  \varepsilon\leq\frac{G-g_0}{H_1},\qquad h\leq\frac{D-d_0}{H_2},\tag{\theequation a,b}
\end{align}
respectively, we obtain
\begin{align}
  \label{eq:107}
  \left|\pp{\alpha'_{n+1}}{u}\right|\leq G-g_0,\qquad \left|\pp{\beta'_{n+1}}{v}\right|\leq D-d_0.
\end{align}
In view of \eqref{eq:101}, \eqref{eq:107} the proof is complete.
\end{proof}

\begin{comment}
\begin{remark}
  It is important to note that the smallness conditions in the above proof which are put on $h$ do not depend on the differences of quantities on $C^-$ from the supremum of these quantities on $C^-$. E.g.~the smallness condition in (\ref{eq:106ab}a) does depend on $G-g_0$.
\end{remark}
\end{comment}

\begin{lemma}
  Let the hypotheses of lemma \ref{lemma_ind} be satisfied. If $h$ and $\varepsilon$ are sufficiently small, depending on $A$, $B$, $D$, $G$, $M$, $r_m$, $r_M$, $l$ then the sequence $((\alpha_n',\beta_n');n=0,1,2,\ldots)$ is a contractive sequence in the space $C^1(\Pi_{h\varepsilon},\mathbb{R}^2)$.
\end{lemma}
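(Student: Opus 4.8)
The plan is to show that the iteration map contracts consecutive differences in the $C^1$-norm. For a generic quantity $X$ write $\Delta_nX\defeq X_n-X_{n-1}$; since the reference data $\alpha(u,0)$, $\beta(0,v)$ cancel we have $\Delta_n\alpha=\Delta_n\alpha'_n-\Delta_n\alpha'_{n-1}$ in the obvious sense, and it suffices to estimate the unprimed differences. I would first record the homogeneous boundary conditions inherited from the construction: $\Delta_n\alpha(u,0)=\Delta_n\beta(0,v)=0$, $\Delta_n r(0,v)=0$, and $\Delta_n\mu(u,0)=0$, $\Delta_n\nu(0,v)=0$ (because $t_n(u,0)=t(u,0)$ and $t_n(0,v)=v$ for every $n$). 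The essential consequence of the first two, via the fundamental theorem of calculus, is
\begin{align}
\sup_{\Pi_{h\varepsilon}}|\Delta_n\alpha|\leq\varepsilon\sup_{\Pi_{h\varepsilon}}\left|\pp{\Delta_n\alpha}{v}\right|,\qquad \sup_{\Pi_{h\varepsilon}}|\Delta_n\beta|\leq h\sup_{\Pi_{h\varepsilon}}\left|\pp{\Delta_n\beta}{u}\right|,
\end{align}
so that, denoting by $\|\cdot\|_{C^1}$ the norm on $C^1(\Pi_{h\varepsilon},\mathbb{R}^2)$, the $C^0$-parts already satisfy $\sup|\Delta_n\alpha|,\sup|\Delta_n\beta|\leq(h+\varepsilon)\|(\Delta_n\alpha,\Delta_n\beta)\|_{C^1}$.

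Next I would propagate the difference through the auxiliary quantities, throughout using that every iterate lies in the fixed compact set $\Omega_l$ (see \eqref{eq:3}) and obeys the uniform bounds of Lemma \ref{lemma_ind}, so that all constants below depend only on $A,B,D,G,M,r_m,r_M,l$ and not on $n$. Since $F\in C^1$, \eqref{eq:77} gives $|\Delta_nF|\leq C(|\Delta_n\alpha|+|\Delta_n\beta|+|\Delta_nr|)$, and differentiating $F_n=F(\alpha_n,\beta_n,r_n)$ bounds the differences of $\pp{F_n}{u}$, $\pp{F_n}{v}$ by the full $C^1$-norm of $(\Delta_n\alpha,\Delta_n\beta)$ together with $\Delta_nr$ and its derivatives. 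Likewise the coefficients $K_n,L_n$ of \eqref{eq:169}, which through \eqref{eq:168}, \eqref{eq:172} involve the first derivatives of $\alpha_n,\beta_n$, satisfy $\sup|\Delta_nK|,\sup|\Delta_nL|\leq C\|(\Delta_n\alpha,\Delta_n\beta)\|_{C^1}$. Feeding these sources into the differences of the integral representations \eqref{eq:84}, \eqref{eq:121} and rerunning the Gronwall argument of Lemma \ref{lemma_ind} with the vanishing data $\Delta_n\mu(u,0)=0$, $\Delta_n\nu(0,v)=0$, the $v$- and $u$-integrations produce the gains
\begin{align}
\sup_{\Pi_{h\varepsilon}}|\Delta_n\mu|\leq C\varepsilon\,\|(\Delta_n\alpha,\Delta_n\beta)\|_{C^1},\qquad \sup_{\Pi_{h\varepsilon}}|\Delta_n\nu|\leq Ch\,\|(\Delta_n\alpha,\Delta_n\beta)\|_{C^1}.
\end{align}
Finally \eqref{eq:119}, \eqref{eq:118} give $\pp{\Delta_nr}{u}=\Delta_n(\mu c_-)$ and $\pp{\Delta_nr}{v}=\Delta_n(\nu c_+)$, so that $\Delta_nr$ is controlled in $C^1$ by the already-small quantities above, its $C^0$-part carrying the extra factor $h$ from $\Delta_nr(0,v)=0$.

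It then remains to assemble the estimate for $\|(\Delta_{n+1}\alpha,\Delta_{n+1}\beta)\|_{C^1}$. The four integrated components — $\Delta_{n+1}\alpha$, $\pp{\Delta_{n+1}\alpha}{u}$ from \eqref{eq:70}, \eqref{eq:103}, and $\Delta_{n+1}\beta$, $\pp{\Delta_{n+1}\beta}{v}$ from \eqref{eq:102}, \eqref{eq:189} — each carry an explicit factor $\varepsilon$ or $h$ from the outer integration, hence are bounded by $C(h+\varepsilon)\|(\Delta_n\alpha,\Delta_n\beta)\|_{C^1}$. The main obstacle is the two transport derivatives, which are the integrands themselves and so are not manifestly small:
\begin{align}
\pp{\Delta_{n+1}\alpha}{v}=\nu_n\,\Delta_nF+(\Delta_n\nu)F_{n-1},\qquad \pp{\Delta_{n+1}\beta}{u}=\mu_n\,\Delta_nF+(\Delta_n\mu)F_{n-1}.
\end{align}
This is resolved by the first two paragraphs: $\Delta_n\nu,\Delta_n\mu$ are small by the displayed $\mu$–$\nu$ bounds, while $\Delta_nF\leq C(|\Delta_n\alpha|+|\Delta_n\beta|+|\Delta_nr|)$, each term of which already carries a factor $h$ or $\varepsilon$. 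Thus the transport derivatives too are bounded by $C(h+\varepsilon)\|(\Delta_n\alpha,\Delta_n\beta)\|_{C^1}$, and collecting all six components gives
\begin{align}
\|(\Delta_{n+1}\alpha,\Delta_{n+1}\beta)\|_{C^1}\leq C(h+\varepsilon)\,\|(\Delta_n\alpha,\Delta_n\beta)\|_{C^1}.
\end{align}
Choosing $h,\varepsilon$ small enough that $\kappa\defeq C(h+\varepsilon)<1$ makes the sequence contractive; since $C^1(\Pi_{h\varepsilon},\mathbb{R}^2)$ is complete this moreover yields convergence to the desired fixed point.
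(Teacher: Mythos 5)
Your proposal is correct and follows essentially the same route as the paper: estimate the differences $\Delta_n\alpha$, $\Delta_n\beta$, $\Delta_n r$, $\Delta_n\mu$, $\Delta_n\nu$, $\Delta_nK$, $\Delta_nL$ in terms of the $C^1$-size of $(\Delta_n\alpha,\Delta_n\beta)$, each gaining a factor $u\leq h$ or $v\leq\varepsilon$ from the vanishing boundary data and the integral representations, and then observe that the transport derivatives $\partial\Delta_{n+1}\alpha/\partial v$, $\partial\Delta_{n+1}\beta/\partial u$ — though not themselves integrals — are small because their constituents $\Delta_nF$, $\Delta_n\nu$, $\Delta_n\mu$ already are. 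The paper organizes this around the quantity $\Lambda$ built from the four first derivatives rather than the full $C^1$-norm, but this is only a cosmetic difference.
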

\begin{proof}
We use the definition
\begin{align}
  \label{eq:80}
  \Delta_nf\defeq f_n-f_{n-1}.
\end{align}
Let
\begin{align}
  \label{eq:139}
  \Lambda\defeq \max\left\{\sup_{\Pi_{h\varepsilon}}\left|\pp{\Delta_{n}\alpha'}{u}\right|,\sup_{\Pi_{h\varepsilon}}\left|\pp{\Delta_{n}\alpha'}{v}\right|,\sup_{\Pi_{h\varepsilon}}\left|\pp{\Delta_{n}\beta'}{u}\right|+\sup_{\Pi_{h\varepsilon}}\left|\pp{\Delta_{n}\beta'}{v}\right|\right\}.
\end{align}
In the following we denote by $C$ a constant depending on $A$, $B$, $D$, $G$, $M$, $r_m$, $r_M$, $l$. From \eqref{eq:70}, \eqref{eq:102} we have
\begin{align}
  \label{eq:122}
  \left|\pp{\Delta_{n+1}\alpha'}{v}\right|&\leq C(|\Delta_n\nu|+|\Delta_n\alpha|+|\Delta_n\beta|+|\Delta_nr|),\\
  \left|\pp{\Delta_{n+1}\beta'}{u}\right|&\leq C(|\Delta_n\mu|+|\Delta_n\alpha|+|\Delta_n\beta|+|\Delta_nr|).
\end{align}
For the differences $\Delta_n\alpha$, $\Delta_n\beta$ we have
\begin{align}
  \label{eq:125}
  |\Delta_n\alpha|\leq v\sup_{\Pi_{h\varepsilon}}\left|\pp{\Delta_{n}\alpha'}{v}\right|,\qquad
|\Delta_n\beta|\leq u\sup_{\Pi_{h\varepsilon}}\left|\pp{\Delta_{n}\beta'}{u}\right|.
\end{align}
For the difference $\Delta_nr$ we use \eqref{eq:68} with $n$ and $n-1$ in the role of $0$. We obtain
\begin{align}
  \label{eq:133}
  \Delta_nr=\int_0^u(\mu_n\Delta_nc_-+c_{-,n-1}\Delta_n\mu)(u',v)du'.
\end{align}
In view of \eqref{eq:125}, we have
\begin{align}
  \label{eq:134}
  |\Delta_nc_\pm|\leq C\left(v\sup_{\Pi_{h\varepsilon}}\left|\pp{\Delta_{n}\alpha'}{v}\right|+u\sup_{\Pi_{h\varepsilon}}\left|\pp{\Delta_{n}\beta'}{u}\right|\right).
\end{align}
Now, from \eqref{eq:60} with $n$ and $n-1$ in the role of $0$ we obtain
\begin{align}
  \label{eq:135}
  \pppp{\Delta_nt}{u}{v}+K_n\Delta_n\nu-L_n\Delta_n\mu=\Xi_n,
\end{align}
where
\begin{align}
  \label{eq:136}
  \Xi_n\defeq \mu_{n-1}\Delta_nL-\nu_{n-1}\Delta_nK.
\end{align}
Making use of \eqref{eq:134} we get
\begin{align}
  \label{eq:138}
  |\Delta_nK|&\leq\left|\Delta_{n}\left(\frac{1}{c_+-c_-}\pp{c_+}{u}\right)\right|\notag\\
&\leq C\left\{v\sup_{\Pi_{h\varepsilon}}\left|\pp{\Delta_{n}\alpha'}{v}\right|+u\sup_{\Pi_{h\varepsilon}}\left|\pp{\Delta_{n}\beta'}{u}\right|+\left|\pp{\Delta_n\alpha'}{u}\right|+\left|\pp{\Delta_n\beta'}{u}\right|\right\}\notag\\
  &\leq C\Lambda,\\
  |\Delta_nL|&\leq\left|\Delta_{n}\left(\frac{1}{c_+-c_-}\pp{c_-}{v}\right)\right|\notag\\
&\leq C\left\{v\sup_{\Pi_{h\varepsilon}}\left|\pp{\Delta_{n}\alpha'}{v}\right|+u\sup_{\Pi_{h\varepsilon}}\left|\pp{\Delta_{n}\beta'}{u}\right|+\left|\pp{\Delta_n\alpha'}{v}\right|+\left|\pp{\Delta_n\beta'}{v}\right|\right\}\notag\\
  &\leq C\Lambda.
\end{align}
Therefore,
\begin{align}
  \label{eq:144}
  |\Xi_n|\leq C\Lambda.
\end{align}
Integrating \eqref{eq:135} yields
\begin{align}
  \label{eq:137}
  \Delta_n\mu(u,v)&=-\int_0^ve^{\int_{v'}^vL_n(u,v'')dv''}\left(K_n\Delta_n\nu-\Xi_n\right)(u,v')dv',\\
  \label{eq:140}
  \Delta_n\nu(u,v)&=\int_0^ue^{-\int_{u'}^uK_n(u'',v)du''}\left(L_n\Delta_n\mu+\Xi_n\right)(u',v)du'.
\end{align}
Substituting \eqref{eq:137} into \eqref{eq:140} and following a similar procedure as was carried out in the prove of the previous lemma, we arrive at
\begin{align}
  \label{eq:141}
  |\Delta_n\nu|\leq Cu\Lambda,\\
  \label{eq:148}
  |\Delta_n\mu|\leq Cv\Lambda.
\end{align}
Using this in \eqref{eq:133} we obtain
\begin{align}
  \label{eq:142}
  |\Delta_nr|\leq Cu(u+v)\Lambda.
\end{align}
Therefore,
\begin{align}
  \label{eq:143}
  \left|\pp{\Delta_{n+1}\alpha'}{v}\right|,\left|\pp{\Delta_{n+1}\beta'}{u}\right|&\leq C(u+v)\Lambda.
\end{align}

To estimate $|\partial\Delta_n\alpha'/\partial u|$ and $|\partial\Delta_n\beta'/\partial v|$ we use \eqref{eq:103}, \eqref{eq:189}. To estimate the mixed derivative on the right hand side of the resulting difference, we observe that from \eqref{eq:135}, \eqref{eq:144}, \eqref{eq:141}, \eqref{eq:148} we get
\begin{align}
  \label{eq:145}
  \left|\pppp{\Delta_nt}{u}{v}\right|\leq C\Lambda.
\end{align}
We obtain
\begin{align}
  \label{eq:146}
  \left|\pp{\Delta_{n+1}\alpha'}{u}\right|,\left|\pp{\Delta_{n+1}\beta'}{v}\right|\leq C(u+v)\Lambda.
\end{align}
In view of \eqref{eq:143}, \eqref{eq:146} and recalling that the constants in these equations depend on $A$, $B$, $D$, $G$, $M$, $r_m$, $r_M$, $l$, we see that for sufficiently small $h$ and $\varepsilon$, depending on $A$, $B$, $D$, $G$, $M$, $r_m$, $r_M$, $l$, the sequence contracts in $C^1(\Pi_{h\varepsilon},\mathbb{R}^2)$.
\end{proof}

The two lemmas above show that the sequence $(\alpha_n',\beta_n')$ converges to $(\alpha',\beta')\in \mathcal{C}$ uniformly in $\Pi_{h\varepsilon}$. Therefore we also have uniform convergence of $(\alpha_n,\beta_n)$ to $(\alpha,\beta)\in C^1(\Pi_{h\varepsilon})$. Now, \eqref{eq:141}, \eqref{eq:148} show the convergence of the derivatives of $t_n$. Therefore, the pair of integral equations \eqref{eq:84}, \eqref{eq:121} are satisfied in the limit. We denote by $t$ the limit of $(t_n)$. It then follows that the mixed derivative $\partial^2t/\partial u\partial v$ satisfies \eqref{eq:169}. In view of the Hodograph system \eqref{eq:9} the partial derivatives of $r_n$  converge uniformly in $\Pi_{h\varepsilon}$ and the limit satisfies the Hodograph system. Let us denote by $r$ the limit of $(r_n)$. We have thus found a solution of the characteristic initial value problem in $\Pi_{h\varepsilon}$. We note that the solution satisfies the bounds \eqref{eq:53}, \eqref{eq:56}.

The two previous lemmas establish the following proposition.
\begin{proposition}
Let us be given data on $C^+$ of size $a_0$, $b_0$, $d_0$, $r_m$, $r_M$ according to \eqref{eq:26}, \eqref{eq:14}. Let us be given a constant $l>1$. Let us be given data on $C^-$ which agrees with the data on $C^+$ at $(u,v)=(0,0)$ and is of size $m_0$, $g_0$ according to \eqref{eq:76}. Then, for $h$ and $\varepsilon$ sufficiently small, depending on the size of the initial data, we have existence of a $C^1$ solution of the characteristic system in $\Pi_{h\varepsilon}$ which satisfies the bounds \eqref{eq:53}, \eqref{eq:56}.
\end{proposition}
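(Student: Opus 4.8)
The plan is to obtain the solution as the uniform $C^1$ limit of the Picard-type iteration $((\alpha_n,\beta_n,t_n,r_n);n\geq 0)$ constructed above, using the two preceding lemmas to control it. First I would invoke Lemma \ref{lemma_ind}: choosing $h$ and $\varepsilon$ so small that all the smallness conditions encountered in its proof (\eqref{eq:83}, \eqref{eq:194}, \eqref{eq:100}, \eqref{eq:97}, \eqref{eq:195}, \eqref{eq:106}) hold simultaneously, the sequence of differences $(\alpha'_n,\beta'_n)$ stays inside the closed set $\mathcal{C}\subset C^1(\Pi_{h\varepsilon},\mathbb{R}^2)$. This delivers uniform $C^1$ bounds on the iterates and, through the auxiliary bounds on $\mu_n$, $\nu_n$, $r_n$ derived inside that proof, guarantees $(\alpha_n,\beta_n,r_n)\in\Omega_l$ (see \eqref{eq:3}) together with $|\nu_n|\leq l$ at every stage. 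These are exactly the hypotheses under which Lemma \ref{lemma_gamma_mu} supplies the bounds $|\gamma_n|\leq G$ and $|\mu_n|\leq M$, so the constants $\overline{F}$, $\overline{K}$, $\overline{L}$ from \eqref{eq:74}, \eqref{eq:109}, \eqref{eq:86} remain valid uniformly in $n$.

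Next I would apply the second of the two preceding lemmas, the contraction estimate. Since $C^1(\Pi_{h\varepsilon},\mathbb{R}^2)$ is complete and $\mathcal{C}$ is closed, a further shrinking of $h$ and $\varepsilon$ turns the iteration into a contraction on $\mathcal{C}$; summing the geometric series of the differences $\Delta_n$ then shows that $(\alpha'_n,\beta'_n)$ converges in $C^1$ to a limit $(\alpha',\beta')\in\mathcal{C}$, and restoring the boundary data gives $(\alpha_n,\beta_n)\to(\alpha,\beta)$ in $C^1(\Pi_{h\varepsilon})$. The difference estimates \eqref{eq:141}, \eqref{eq:148} show that $(\nu_n)$ and $(\mu_n)$ are Cauchy and hence converge uniformly, so the integral representations \eqref{eq:84}, \eqref{eq:121} pass to the limit and the limiting $\mu$, $\nu$ satisfy those same relations; consequently the limit $t$ of $(t_n)$ has a mixed derivative solving \eqref{eq:169}. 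The Hodograph system \eqref{eq:9} then forces $\partial r_n/\partial u$ and $\partial r_n/\partial v$ to converge uniformly, yielding $r_n\to r$ with the limit satisfying \eqref{eq:9}.

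It remains to check that $(\alpha,\beta,t,r)$ genuinely solves the characteristic system and inherits the bounds. Passing to the limit in the iteration relations \eqref{eq:70}, \eqref{eq:102} recovers the integral forms \eqref{eq:167}, \eqref{eq:165}, i.e.\ \eqref{eq:8}, while the established convergence of the $t$- and $r$-derivatives gives \eqref{eq:9} and \eqref{eq:169}; thus the limit is a $C^1$ solution of the full coupled system in $\Pi_{h\varepsilon}$. Because each iterate obeys \eqref{eq:53} and, via Lemma \ref{lemma_gamma_mu}, \eqref{eq:56}, and because these are non-strict inequalities preserved under uniform convergence, the limit satisfies them as well. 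This is precisely the assertion of the proposition.

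The step I expect to be the main obstacle is not the convergence itself but the \emph{self-consistency} of the bootstrap. The bounds $A$, $B$, $D$, $G$, $M$ enter both the definition of $\mathcal{C}$ and the constants $H_1$, $H_2$ of \eqref{eq:79}, \eqref{eq:196} that govern the smallness conditions, yet the same bounds must be reproduced by one application of the iteration map. One must verify that there is a single common choice of $h$ and $\varepsilon$ for which every condition of Lemma \ref{lemma_ind} and of the contraction lemma holds at once, and that shrinking $h$, $\varepsilon$ to secure the contraction does not spoil the invariance of $\mathcal{C}$. Tracking which constants depend on $h$, $\varepsilon$ and which do not, and exploiting that the relevant prefactors tend to $1$ or $0$ as $h,\varepsilon\to 0$ (as in \eqref{eq:160}), is what makes the required simultaneous smallness achievable.
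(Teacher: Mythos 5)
Your proposal is correct and follows essentially the same route as the paper: the proposition is obtained exactly by combining Lemma \ref{lemma_ind} (invariance of the set $\mathcal{C}$ under the iteration for $h$, $\varepsilon$ small) with the contraction lemma, passing to the $C^1$ limit, recovering the integral relations \eqref{eq:167}, \eqref{eq:165}, \eqref{eq:84}, \eqref{eq:121} and the Hodograph system in the limit, and noting that the closed bounds \eqref{eq:53}, \eqref{eq:56} survive uniform convergence. The self-consistency concern you raise is resolved in the paper by fixing $A$, $B$, $D$, $G$, $M$ in advance of the smallness conditions, so that all constraints on $h$, $\varepsilon$ can be met simultaneously, just as you anticipate.
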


We supplement this proposition with the following uniqueness result.
\begin{proposition}
Let $(\alpha_1,\beta_1,t_1,r_1)$ and $(\alpha_2,\beta_2,t_2,r_2)$, both in $C^1(\Pi_{h\varepsilon})$, be two solutions of the characteristic system corresponding to the same initial data. Then, for $h$, $\varepsilon$ sufficiently small, depending on the size of the initial data, the two solutions coincide.
\end{proposition}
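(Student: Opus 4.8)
The plan is to run the same difference-and-Gronwall estimate as in the preceding contraction lemma, but now on the difference of the two given solutions instead of on consecutive iterates. Write $\Delta f \defeq f_1 - f_2$ for each $f \in \{\alpha,\beta,t,r,\mu,\nu\}$. Because both solutions are generated by the same characteristic initial data on $C^+\cup C^-$, the differences $\Delta\alpha,\Delta\beta,\Delta t,\Delta r$ vanish on the two axes; moreover $\mu(u,0)=1$ and $\nu(0,v)=1$ for both solutions, so $\Delta\mu(u,0)=\Delta\nu(0,v)=0$ as well. The outcome I am aiming for is a \emph{self-referential} inequality $\Lambda \le C(h+\varepsilon)\Lambda$ with $C$ depending only on the size of the data, which for $h,\varepsilon$ small forces $\Lambda=0$.

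The one step that is not purely mechanical, and which I expect to be the main obstacle, is ensuring that the constant $C$ is genuinely independent of the two solutions. This requires both solutions to lie in $\Omega_l$ and to satisfy \eqref{eq:53}, \eqref{eq:56} throughout $\Pi_{h\varepsilon}$, which a general $C^1$ solution need not do a priori. I would handle this by comparing each given solution with the solution $S$ produced by the existence proposition, which does satisfy the strict bounds. Let $U \subseteq \Pi_{h\varepsilon}$ be the relatively open set on which the given solution also satisfies \eqref{eq:53}, \eqref{eq:56}; by continuity $U$ contains a neighborhood of the corner, where the given solution agrees with $S$. On $U$ the estimate below applies and forces agreement with $S$; but on the agreement set the solution inherits the strict bounds of $S$, so $U$ is also relatively closed, whence $U=\Pi_{h\varepsilon}$. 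This reduces matters to two solutions both obeying \eqref{eq:53}, \eqref{eq:56}, so that $F$, $c_\pm$, $K$, $L$, $\eta$, $\eta'$ and their first derivatives are all bounded on $\Omega_l$ by constants depending only on $A,B,D,G,M,r_m,r_M,l$.

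With this in hand I set up the difference equations. Subtracting the two copies of \eqref{eq:167}, \eqref{eq:165} and of the $r$-equation \eqref{eq:119}, and splitting products as $\nu_1F_1-\nu_2F_2=(\Delta\nu)F_1+\nu_2(\Delta F)$ and similarly for $\mu F$ and $\mu c_-$, the Lipschitz bounds give $|\Delta F|,|\Delta c_\pm| \le C(|\Delta\alpha|+|\Delta\beta|+|\Delta r|)$ as in \eqref{eq:134}. Introducing, in analogy with \eqref{eq:139},
\[
\Lambda \defeq \max\left\{\sup_{\Pi_{h\varepsilon}}\left|\pp{\Delta\alpha}{u}\right|,\ \sup_{\Pi_{h\varepsilon}}\left|\pp{\Delta\alpha}{v}\right|,\ \sup_{\Pi_{h\varepsilon}}\left|\pp{\Delta\beta}{u}\right|+\sup_{\Pi_{h\varepsilon}}\left|\pp{\Delta\beta}{v}\right|\right\},
\]
the vanishing on the axes gives $|\Delta\alpha|\le v\Lambda$, $|\Delta\beta|\le u\Lambda$, hence $|\Delta c_\pm|,|\Delta F| \le C(u+v)\Lambda$. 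Subtracting the two $t$-equations \eqref{eq:169} yields an equation for $\pppp{\Delta t}{u}{v}$ with source $\Xi \defeq \mu_2\Delta L - \nu_2\Delta K$, and $|\Delta K|,|\Delta L| \le C\Lambda$ exactly as in \eqref{eq:138}, so $|\Xi|\le C\Lambda$. The coupled Volterra system for $(\Delta\mu,\Delta\nu)$ (the analog of \eqref{eq:137}, \eqref{eq:140}) is closed by the same Gronwall iteration used before, giving $|\Delta\nu|\le Cu\Lambda$, $|\Delta\mu|\le Cv\Lambda$ (cf. \eqref{eq:141}, \eqref{eq:148}), then $|\Delta r|\le Cu(u+v)\Lambda$ (cf. \eqref{eq:142}) and $|\pppp{\Delta t}{u}{v}|\le C\Lambda$ (cf. \eqref{eq:145}).

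Feeding these bounds back into the differentiated difference equations for $\alpha'$ and $\beta'$ (the analogs of \eqref{eq:143}, \eqref{eq:146}) produces
\[
\left|\pp{\Delta\alpha}{u}\right|,\ \left|\pp{\Delta\alpha}{v}\right|,\ \left|\pp{\Delta\beta}{u}\right|,\ \left|\pp{\Delta\beta}{v}\right| \le C(u+v)\Lambda \le C(h+\varepsilon)\Lambda .
\]
Taking the supremum over $\Pi_{h\varepsilon}$ on the left gives $\Lambda \le C(h+\varepsilon)\Lambda$. Choosing $h,\varepsilon$ small enough that $C(h+\varepsilon)<1$ — a smallness depending only on $A,B,D,G,M,r_m,r_M,l$, i.e. on the size of the data — forces $\Lambda=0$. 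Then all first derivatives of $\Delta\alpha,\Delta\beta$ vanish on $\Pi_{h\varepsilon}$, and since these differences vanish on the axes they vanish identically; the integral representations then give $\Delta r \equiv \Delta\mu \equiv \Delta\nu \equiv 0$ and $\Delta t \equiv 0$, so the two solutions coincide.
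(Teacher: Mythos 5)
Your proposal is correct and follows essentially the route the paper intends: the paper's entire proof is the remark that ``using similar estimates as in the convergence proof above, the proof is straightforward,'' i.e.\ precisely the difference-and-Gronwall argument you carry out on $\Delta f=f_1-f_2$, yielding $\Lambda\le C(h+\varepsilon)\Lambda$ and hence $\Lambda=0$. Your additional open--closed continuation argument ensuring that an arbitrary $C^1$ solution actually satisfies the bounds \eqref{eq:53}, \eqref{eq:56} on all of $\Pi_{h\varepsilon}$ --- so that the constant $C$ depends only on the size of the data and not on the solutions themselves --- addresses a point the paper passes over in silence and is a worthwhile supplement.
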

Using similar estimates as in the convergence proof above, the proof is straightforward.

\subsection{Extension to a Strip}
Now we show that the solution is actually given in $\Pi_{hv^\ast}$, i.e.~we have existence of a unique continuously differentiable solution in a region adjacent to $C^+$ which extends over the full domain of the initial data and is of thickness $h$, where $h$ depends on the size of the initial data in the way given by the above propositions.

\begin{lemma}
  The solution in $\Pi_{h\varepsilon}$ can be continued to $\Pi_{hv^\ast}$.
\end{lemma}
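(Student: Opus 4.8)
The plan is to continue the solution in the $v$-direction by a continuity argument while holding the thickness $h$ in the $u$-direction fixed. The decisive structural fact, isolated in the detailed proof of Lemma~\ref{lemma_ind}, is that the smallness conditions placed on $h$ — namely \eqref{eq:194}, \eqref{eq:195}, (\ref{eq:106ab}b), and the requirement $F_1\le l$ — involve only the global constants $A$, $B$, $D$, $G$, $M$, $r_m$, $r_M$, $l$ and the data on $C^+$; they involve neither $\varepsilon$, nor the $v$-location, nor the gaps $G-g_0$, $M-m_0$. First I would fix $h$ once and for all so that each of these holds strictly, treating $A,B,D,G,M,r_m,r_M$ as the fixed constants attached to the full data on $C^+$ and $C^-$.

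I would then let $\overline v$ be the supremum of those $v_0\in[0,v^\ast]$ for which a $C^1$ solution of the characteristic system exists on $\Pi_{hv_0}$ and obeys the bounds \eqref{eq:53}; by the corner construction $\overline v>0$. The core of the argument is that, with $h$ fixed as above, \eqref{eq:53} is self-improving and uniform in $v$. Assuming \eqref{eq:53} on $\Pi_{hv_0}$, Lemma~\ref{lemma_gamma_mu} yields $|\gamma|\le G$ and $|\mu|\le M$ there (note that $\gamma$ and $\mu$ do not themselves appear in \eqref{eq:53}). Inserting these into the transversal representations — the $u$-integration of $\gamma=\partial\alpha/\partial u$ as in \eqref{eq:115}, the identity \eqref{eq:165} for $\beta$, the identity \eqref{eq:119} for $r$, the representation \eqref{eq:121} for $\nu$, and \eqref{eq:189} for $\delta$ — and integrating across $u$ starting from $C^+$, one recovers each of the five inequalities in \eqref{eq:53} with strict room to spare. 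Because this integration proceeds transversally from $C^+$, where the data bounds $a_0,b_0,d_0,r_m,r_M$ are global, the recovery is uniform in $v$ and consumes only the fixed $h$-conditions. In particular the uniform $C^1$ bounds let the solution extend to $v=\overline v$ and the non-strict bounds pass to the limit, so the defining set of $\overline v$ is closed.

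For openness I would re-anchor the corner construction at the slice $v=\overline v$. The iteration \eqref{eq:170}–\eqref{eq:102} uses only the values of $\alpha$ on the lower edge and of $\beta$ on $C^+$, so replacing $\alpha(u,0)$ by the already-constructed trace $\alpha(u,\overline v)$, while keeping $\beta(0,v)$ for $v\ge\overline v$, reproduces the scheme without change. Since the slice data lie strictly inside the global bounds — by the improvement just described and by the strict inequalities of Lemma~\ref{lemma_gamma_mu} for $\overline v<v^\ast$ — the existence and contraction lemmas apply (with the minor adjustment noted below) and produce a solution on the slab $\overline v\le v\le\overline v+\delta$, $0\le u\le h$, for some $\delta>0$. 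Together with closedness this forces $\overline v=v^\ast$, and the solution fills $\Pi_{hv^\ast}$.

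The step that requires care — and the main obstacle — is to keep this local step length $\delta$ bounded below as $\overline v\to v^\ast$. The danger lies not in the final bounds but in the iterate bounds on $\gamma_n$ and $\mu_n$ in the re-anchored scheme: condition (\ref{eq:106ab}a) reads $\varepsilon\le(G-g_0)/H_1$, and the slice value $g_0=\sup_u|\gamma(u,\overline v)|$ may approach $G$ as $\overline v\to v^\ast$, seemingly forcing $\delta\to0$. I would circumvent this by granting the iterates alone a fixed relaxation $\hat G>G$, $\hat M>M$, so that the admissible step satisfies $\delta\le(\hat G-G)/H_1>0$ uniformly in $\overline v$; the limit of the iterates nevertheless satisfies the sharp bounds $|\gamma|\le G$, $|\mu|\le M$ via Lemma~\ref{lemma_gamma_mu}. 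This is exactly the quantitative meaning of the observation that the conditions on $h$, unlike those on $\varepsilon$, never involve the gaps $G-g_0$ or $M-m_0$ and hence do not degrade under continuation.
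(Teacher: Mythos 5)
Your proposal is correct and follows essentially the same route as the paper: a continuation (supremum/bootstrap) argument in $v$ with $h$ held fixed, resting on the observation that the smallness conditions on $h$ involve only the global constants and the $C^+$ data, strictly improving the bounds \eqref{eq:53} on $\Pi_{h\overline v}$ via Lemma~\ref{lemma_gamma_mu} and the transversal integral representations, and then re-solving a corner problem anchored at the slice $v=\overline v$. The only divergence is your final paragraph: the uniform-in-$\overline v$ lower bound on the step $\delta$ (and the relaxation $\hat G>G$) is not needed, since in a supremum argument one only has to extend past the single value $\overline v<v^\ast$, where the strict inequalities $|\gamma|<G$, $|\mu|<M$ of Lemma~\ref{lemma_gamma_mu} (valid for $v<v^\ast$) and compactness in $u$ already give $g_0'=\max_{u\in[0,h]}|\gamma(u,\overline v)|<G$ and hence some positive admissible $\varepsilon$, which is all the contradiction requires.
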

\begin{proof}
As a consequence of the results above, the solution satisfies the following bounds in $\Pi_{h\varepsilon}$:
\begin{align}
  \label{eq:111}
  |\nu|\leq l,\qquad |\alpha|\leq A,\qquad |\beta|\leq B,\qquad |\delta|\leq D,\qquad \tfrac{1}{2}r_m\leq r\leq \tfrac{3}{2}r_M.\tag{BA$\sharp$}
\end{align}
Let
\begin{align}
  \label{eq:147}
  v_2&\defeq\sup\Big\{v\in(0,v^\ast):\eqref{eq:111} \textrm{ holds and the solution is unique for $(u,v)\in\Pi_{hv}$}\Big\}.
\end{align}
Let us assume $v_2<v^\ast$. Since the solution is continuously differentiable it is also unique at $v=v_2$. In the following all statements hold within $\Pi_{hv_2}$. From lemma \ref{lemma_gamma_mu} we have
\begin{align}
  \label{eq:112}
  |\gamma|< G,\qquad |\mu|< M.
\end{align}
From now on we will make use of \eqref{eq:111}, \eqref{eq:112} without further notice. We look at
\begin{align}
  \label{eq:113}
  \beta(u,v)=\beta(0,v)+\int_0^u(\mu F)(u',v)du'.
\end{align}
We have
\begin{align}
  \label{eq:114}
  |\beta(u,v)|&< b_0+h M\overline{F}\notag\\
  &\leq b_0+(l-1)b_0=lb_0=B,
\end{align}
where for the second inequality we used \eqref{eq:194}. We look at
\begin{align}
  \label{eq:116}
  r(u,v)=r(0,v)+\int_0^u(c_-\mu)(u',v)du'.
\end{align}
We have
\begin{align}
  \label{eq:117}
  r(u,v)&< r_M+hc_-^\dagger M\notag\\
  &\leq r_M+\tfrac{1}{2}r_m\leq \tfrac{3}{2}r_M,
\end{align}
where for the second inequality we used \eqref{eq:195}. Similarly we find $r(u,v)>\tfrac{1}{2}r_m$. Therefore,
\begin{align}
  \label{eq:120}
  \tfrac{1}{2}r_m<r<\tfrac{3}{2}r_M.
\end{align}
We have
\begin{align}
  \label{eq:123}
  |L|&=\frac{1}{c_+-c_-}\left|\pp{c_-}{v}\right|\leq C_{+-}\left(C_{-\alpha}l\overline{F}+C_{-\beta}D\right)=\overline{L},\\
  \label{eq:124}
  |K|&=\frac{1}{c_+-c_-}\left|\pp{c_+}{u}\right|< C_{+-}\left(C_{+\alpha}G+C_{+\beta}M\overline{F}\right)=\overline{K}.
\end{align}
For the definition of $\overline{L}$, $\overline{K}$ see \eqref{eq:86}, \eqref{eq:109} respectively. We deduce that there exists a constant $\overline{K}'$ such that
\begin{align}
  \label{eq:163}
  |K|\leq \overline{K}'<\overline{K}.
\end{align}
Therefore, the inequalities \eqref{eq:87}, \eqref{eq:88} hold for $\mu$, $\nu$ with $\overline{K}'$ in the role of $\overline{K}$. This then implies that \eqref{eq:95} holds with $\overline{K}'$ in the role of $\overline{K}$, i.e.
\begin{align}
  \label{eq:130}
  |\nu(u,v)|\leq F_1'(u,v),
\end{align}
where by $F_1'(u,v)$ we denote $F_1(u,v)$ with $\overline{K}'$ in the role of $\overline{K}$. Since
\begin{align}
  \label{eq:164}
  F_1'(u,v)&<F_1(u,v)\notag\\
  &\leq F_1(h,v),
\end{align}
together with (\ref{eq:100ab}b), we obtain
\begin{align}
  \label{eq:132}
  |\nu(u,v)|<l.
\end{align}
Using \eqref{eq:123}, \eqref{eq:124} together with \eqref{eq:112} we obtain
\begin{align}
  \label{eq:126}
  \left|\pppp{t}{u}{v}(u,v)\right|< M\overline{L}+l\overline{K}.
\end{align}
We also have
\begin{align}
  \label{eq:131}
  \left|\mu\pp{F}{v}\right|<M\left(F_\alpha l\overline{F}+F_\beta D+F_r c_+^\dagger l\right).
\end{align}
Taking the derivative of \eqref{eq:165} with respect to $v$ and using the previous two estimates we obtain (for the definition of $H_2$ see \eqref{eq:196})
\begin{align}
  \label{eq:166}
  \left|\mu\pp{F}{v}+\pppp{t}{u}{v}F\right|<H_2.
\end{align}
Therefore,
\begin{align}
  \label{eq:127}
  |\delta(u,v)|&< d_0+hH_2\notag\\
  &\leq D,
\end{align}
where for the last inequality we used (\ref{eq:106ab}b). Finally we consider
\begin{align}
  \label{eq:128}
  \alpha(u,v)=\alpha(0,v)+\int_0^u\gamma(u',v)du'.
\end{align}
We have
\begin{align}
  \label{eq:129}
  |\alpha(u,v)|&< a_0+hG\notag\\
  &\leq A,
\end{align}
where we used \eqref{eq:194}. We have therefore established in $\Pi_{h\varepsilon}$:
\begin{align}
  \label{eq:149}
  |\nu|< l,\qquad |\alpha|< A,\qquad |\beta|< B,\qquad |\delta|< D,\qquad \tfrac{1}{2}r_m< r< \tfrac{3}{2}r_M\tag{BA},
\end{align}
i.e.~we have improved \eqref{eq:111}. Therefore, using the result from above, we can solve an initial value problem with corner at $(u,v)=(0,v_2)$, thus establishing a unique solution in $\Pi_{h(v_2+\varepsilon)}$, for some $\varepsilon<v^\ast-v_2$ which satisfies the bounds \eqref{eq:111}. This implies $v_2=v^\ast$.
\end{proof}

\subsection{Higher Regularity}
We now establish uniform bounds in $\Pi_{hv^\ast}$ for the partial derivatives of $\alpha$, $\beta$, $t$ and $r$ to arbitrary order.
\begin{lemma}
The partial derivatives of $\alpha$, $\beta$, $t$, $r$ to all order are, in absolute value, uniformly bounded in $\Pi_{hv^\ast}$.
\end{lemma}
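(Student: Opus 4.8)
The plan is to argue by induction on the total order $N$ of differentiation. The case $N=1$ is already contained in the discussion preceding Lemma \ref{lemma_gamma_mu}, together with the bounds \eqref{eq:149}, \eqref{eq:56} now known to hold throughout $\Pi_{hv^\ast}$ (and, via \eqref{eq:9}, the bounds on $\partial r/\partial u$, $\partial r/\partial v$). So I would assume that every partial derivative of $\alpha,\beta,t,r$ of order at most $N-1$ has already been bounded uniformly in $\Pi_{hv^\ast}$, and produce bounds at order $N$.

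The starting observation is that the characteristic system is triangular with respect to differentiation: equations \eqref{eq:8}, the Hodograph system \eqref{eq:9}, and \eqref{eq:169} prescribe, respectively, $\partial\alpha/\partial v$, $\partial\beta/\partial u$, both first derivatives of $r$, and the mixed derivative $\partial^2t/\partial u\partial v$, each as a smooth function of $(\alpha,\beta,r)\in\Omega_l$ and of the first-order quantities $\mu,\nu,\gamma,\delta$. Consequently, any order-$N$ derivative carrying at least one $\partial_u$ \emph{and} at least one $\partial_v$ can be reduced by Leibniz' rule and repeated use of these equations (peel the outermost $\partial_v$ off $\alpha$ via \eqref{eq:8}, the outermost $\partial_u$ off $\beta$ via \eqref{eq:8}, either derivative off $r$ via \eqref{eq:9}, and expose $\partial^2t/\partial u\partial v$ via \eqref{eq:169}) to a polynomial in derivatives of order at most $N-1$, with coefficients that are smooth functions of $(\alpha,\beta,r)$. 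By the inductive hypothesis these \emph{mixed} order-$N$ derivatives of $\alpha,\beta,t,r$ are therefore already bounded. The only genuinely new quantities are the four pure derivatives $\partial_u^N\alpha$, $\partial_u^N t$, $\partial_v^N\beta$, $\partial_v^N t$; moreover $\partial_u^N\beta,\partial_u^N r$ reduce algebraically to $\partial_u^N t$ plus lower order, and $\partial_v^N\alpha,\partial_v^N r$ to $\partial_v^N t$ plus lower order, so it suffices to bound these four.

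Next I would derive closed transport equations for the two pairs. Differentiating \eqref{eq:8} and \eqref{eq:169} $N-1$ times in $u$ and restricting to a line $u=\text{const}$, the pair $(\partial_u^N\alpha,\partial_u^N t)$ satisfies a linear first-order system in $v$,
\begin{align*}
\frac{\partial}{\partial v}\,\partial_u^N\alpha &= a\,\partial_u^N\alpha+b\,\partial_u^N t+p,\\
\frac{\partial}{\partial v}\,\partial_u^N t &= c\,\partial_u^N\alpha+d\,\partial_u^N t+q,
\end{align*}
of exactly the structure of \eqref{eq:155}, \eqref{eq:157} (the term $F\,\partial_u^N\nu=F\,\partial_v\partial_u^N t$ arising from differentiating \eqref{eq:8} is eliminated using the second equation). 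The coefficients $a,b,c,d$ are smooth functions of $(\alpha,\beta,r)$ times first-order derivatives, hence bounded by \eqref{eq:149}, \eqref{eq:56} and $C_{+-}<\infty$ (here $c_+-c_-=2\eta$ is bounded below on $\Omega_l$), while the forcing terms $p,q$ consist of the mixed and lower-order derivatives bounded in the previous step. An identical computation yields for $(\partial_v^N\beta,\partial_v^N t)$ a linear system in $u$ of the form \eqref{eq:65}, \eqref{eq:67}. The initial data sit on the characteristics: $\partial_u^N\alpha(\cdot,0)$, $\partial_u^N t(\cdot,0)$ are furnished by the characteristic initial data established earlier (recall $t(u,0)=u$, so $\partial_u^N t(\cdot,0)$ vanishes for $N\ge2$), and analogously on $C^+$; being smooth, they are bounded on the compact intervals $[0,h]$, $[0,v^\ast]$.

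Finally, integrating each system along its characteristic direction and applying Gronwall's inequality precisely as in Lemma \ref{lemma_gamma_mu} and in \eqref{eq:84}–\eqref{eq:95} gives uniform bounds on $\partial_u^N\alpha,\partial_u^N t$ and on $\partial_v^N\beta,\partial_v^N t$ over $\Pi_{hv^\ast}$; the remaining order-$N$ derivatives are then bounded through their algebraic expressions, closing the induction. I expect the main obstacle to be organizational rather than analytic: one must verify carefully that the repeated use of \eqref{eq:8}, \eqref{eq:9}, \eqref{eq:169} really does reduce every mixed order-$N$ derivative to order at most $N-1$, and that the two pure pairs decouple from each other (no $\partial_v^N(\cdot)$ enters the $v$-system and no $\partial_u^N(\cdot)$ enters the $u$-system), so that the top-order system is genuinely linear with inductively controlled coefficients and forcing. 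Once this bookkeeping is set up, the Gronwall step is routine and identical in spirit to the estimates already carried out.
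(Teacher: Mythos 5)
Your proposal is correct and follows essentially the same route as the paper's proof: induction on the order, first bounding the mixed derivatives by peeling one derivative off via \eqref{eq:8}, \eqref{eq:9}, \eqref{eq:169} to reduce to order $n-1$, then reducing $\partial_u^n\beta$, $\partial_v^n\alpha$ (and the $r$-derivatives) algebraically to the pure $t$-derivatives, and finally closing a linear system for $(\partial_u^n\alpha,\partial_u^n t)$ integrated in $v$ (and its mirror in $u$) by Gronwall, with data on the characteristics supplied by the constraint-equation solution. The only cosmetic difference is that you state the mixed-derivative reduction for $\alpha,\beta$ as landing directly at order $n-1$, whereas it lands at order $n-1$ plus a mixed order-$n$ derivative of $t$, which must be (and in your scheme is) bounded first.
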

\begin{proof}
We establish such bounds by induction. Let $(P_{n-1})$ be the proposition
\begin{align}
  \label{eq:105}
  \sup_{\Pi_{hv^\ast}}\left|\frac{\partial^{n-1}}{\partial u^i\partial v^j}(\alpha,\beta,t,r)\right|\leq C,\quad i+j=n-1.\tag{$P_{n-1}$}
\end{align}
We have already established proposition ($P_{1}$). Suppose that proposition ($P_k$) holds for $k=1,\ldots,n-1$. In the following we denote by $F_i$ a function in $\Pi_{hv^\ast}$ which involves only $(n-1)'\textrm{th}$ order derivatives of $\alpha$, $\beta$, $t$, $r$ and which is, therefore, uniformly bounded. Functions carrying the same index can change from line to line. We first deal with the mixed derivatives of order $n$. Let $1\leq i,j\leq n-1$, $i+j=n$. In view of \eqref{eq:169} we have
\begin{align}
  \label{eq:171}
  \frac{\partial^nt}{\partial u^i\partial v^j}=\frac{\partial^{n-2}}{\partial u^{i-1}\partial v^{j-1}}(L\mu-K\nu).
\end{align}
Because the right hand side involves only derivatives of order $n-1$ and lower, by the inductive hypothesis, it is bounded. Therefore
\begin{align}
  \label{eq:173}
  \left|\frac{\partial^nt}{\partial u^i\partial v^j}\right|\leq C.
\end{align}
In view of (\ref{eq:8ab}a) we have
\begin{align}
  \label{eq:174}
  \frac{\partial^n\alpha}{\partial u^i\partial v^j}&=\frac{\partial^{n-1}}{\partial u^i\partial v^{j-1}}\left(\pp{\alpha}{v}\right)=\frac{\partial^{n-1}}{\partial u^i\partial v^{j-1}}\left(\nu F\right)\notag\\
  &=F_1\frac{\partial^nt}{\partial u^i\partial v^j}+F_2.
\end{align}
In view of (\ref{eq:8ab}b), the same holds for $\partial^n\beta/\partial u^i\partial v^j$. Therefore, together with \eqref{eq:173}, we obtain
\begin{align}
  \label{eq:175}
  \left|\frac{\partial^n\alpha}{\partial u^i\partial v^j}\right|,\left|\frac{\partial^n\beta}{\partial u^i\partial v^j}\right|\leq C.
\end{align}
We turn to the pure derivatives of order $n$. By \eqref{eq:8} we have
\begin{align}
  \label{eq:176}
  \frac{\partial^n\alpha}{\partial v^n}&=\frac{\partial^{n-1}}{\partial v^{n-1}}(\nu F)=F_1\frac{\partial^nt}{\partial v^n}+F_2,\\
  \label{eq:179}
  \frac{\partial^n\beta}{\partial u^n}&=\frac{\partial^{n-1}}{\partial v^{n-1}}(\mu F)=F_1\frac{\partial^nt}{\partial u^n}+F_2,
\end{align}
while from \eqref{eq:167} we have
\begin{align}
  \label{eq:177}
  \frac{\partial^n\alpha}{\partial u^n}(u,v)&=\frac{\partial^n\alpha}{\partial u^n}(u,0)+\int_0^v\left(\frac{\partial^n}{\partial u^n}(\nu F)\right)(u,v')dv'\notag\\
  &=F_1(u,v)+\int_0^v\left(F_2\frac{\partial^{n+1}t}{\partial u^n\partial v}+F_3\frac{\partial^nF}{\partial u^n}+F_4\frac{\partial^nt}{\partial u^{n-1}\partial v}\right)(u,v')dv'.
\end{align}
The mixed derivative of $t$ of order $n$ is taken care of by \eqref{eq:173}. For the mixed derivative of order $n+1$ of $t$ we look at
\begin{align}
  \label{eq:178}
  \frac{\partial^{n+1}t}{\partial u^n\partial v}&=\frac{\partial^{n-1}}{\partial u^{n-1}}(L\mu-K\nu)\notag\\
&=F_1+F_2\frac{\partial^nt}{\partial u^n}+F_3\frac{\partial^n\alpha}{\partial u^n}+F_4\frac{\partial^n\beta}{\partial u^n},
\end{align}
where for the second equality we use that we already have bounds for the mixed derivatives to order $n$ of $\alpha$, $\beta$, $t$. Using also \eqref{eq:179} we arrive at
\begin{align}
  \label{eq:180}
  \frac{\partial}{\partial v}\left(\frac{\partial^nt}{\partial u^n}\right)=F_1+F_2\frac{\partial^nt}{\partial u^n}+F_3\frac{\partial^n\alpha}{\partial u^n}.
\end{align}
This implies
\begin{align}
  \label{eq:181}
  \frac{\partial^nt}{\partial u^n}(u,v)=F_1(u,v)+\int_0^v\left(F_2\frac{\partial^n\alpha}{\partial u^n}\right)(u,v')dv'.
\end{align}
Using now
\begin{align}
  \label{eq:182}
  \frac{\partial^nF}{\partial u^n}=F_1+F_2\frac{\partial^n\alpha}{\partial u^n}+F_3\frac{\partial^n\beta}{\partial u^n}+F_4\frac{\partial^nt}{\partial u^n},
\end{align}
together with \eqref{eq:178}, \eqref{eq:181} in \eqref{eq:177} we obtain
\begin{align}
  \label{eq:183}
  \frac{\partial^n\alpha}{\partial u^n}(u,v)=F_1(u,v)+\int_0^v\left(F_2\frac{\partial^n\alpha}{\partial u^n}+F_3\frac{\partial^nt}{\partial u^n}\right)(u,v')dv'
\end{align}
This together with \eqref{eq:181} yields the following system of inequalities
\begin{align}
  \label{eq:184}
  \left|\frac{\partial^n\alpha}{\partial u^n}(u,v)\right|&\leq C+C'\int_0^v\left(\left|\frac{\partial^n\alpha}{\partial u^n}\right|+\left|\frac{\partial^nt}{\partial u^n}\right|\right)(u,v')dv',\\
\left|\frac{\partial^nt}{\partial u^n}(u,v)\right|&\leq C+C'\int_0^v\left|\frac{\partial^n\alpha}{\partial u^n}\right|(u,v')dv',
\end{align}
which implies
\begin{align}
  \label{eq:185}
  \left|\frac{\partial^n\alpha}{\partial u^n}(u,v)\right|,\left|\frac{\partial^nt}{\partial u^n}(u,v)\right|\leq C.
\end{align}
Similarly we obtain
\begin{align}
  \label{eq:186}
  \left|\frac{\partial^n\beta}{\partial v^n}(u,v)\right|,\left|\frac{\partial^nt}{\partial v^n}(u,v)\right|\leq C.
\end{align}
In view of \eqref{eq:176}, \eqref{eq:179} these imply
\begin{align}
  \label{eq:187}
  \left|\frac{\partial^n\beta}{\partial u^n}(u,v)\right|,\left|\frac{\partial^n\alpha}{\partial v^n}(u,v)\right|\leq C.
\end{align}
These together with \eqref{eq:173}, \eqref{eq:175} imply proposition $(P_n)$.
\end{proof}

The above existence, uniqueness, continuation and regularity result can be carried out for a region adjacent to $C^-$ as well. Together with the solution of the constraint equations from the previous section we arrive at the following result.

\begin{theorem}
  Let $r_0>0$ and let us be given free data  $\beta^+, t^+\in C^\infty[0,v^\ast]$, $\alpha^-,t^-\in C^\infty[0,u^\ast]$ such that $t_+(0)=t^-(0)$. Then, for $h'$, $h''$ sufficiently small depending on the size of the data, there exists a unique smooth solution $\alpha$, $\beta$, $t$, $r$ of the characteristic system of equations for $(u,v)\in \Pi_{u^\ast h'}\cup\Pi_{h''v^\ast}$, where
  \begin{align}
    \label{eq:197}
    \Pi_{ab}&\defeq\left\{(u,v)\in\mathbb{R}^2:0\leq u\leq a,0\leq v\leq b\right\},
  \end{align}
such that
\begin{alignat}{3}
  \label{eq:198}
  r(0,0)=r_0,\qquad t(0,v)&=t^+(v),&\qquad t(u,0)&=t^-(u),\\
  \beta(0,v)&=\beta^-(v), & \alpha(u,0)&=\alpha^-(u).
\end{alignat}
\end{theorem}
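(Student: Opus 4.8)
The plan is to build the solution from the two one-dimensional data sets furnished by the constraint equations and then to glue together two transversal strips, one adjacent to each characteristic. First I would recover the characteristic data. By part i) of the Lemma on the constraint equations and the subsequent Proposition establishing characteristic initial data, the constraints \eqref{eq:18} along $C^+$ possess a unique smooth solution $(\alpha^+,r^+)$ on $[0,v^*]$ with $\alpha^+(0)=\alpha^-(0)$, $r^+(0)=r_0$, all of whose derivatives remain bounded. Since the derived radius $r^-_0$ produced along $C^-$ is continuous and positive on the compact interval $[0,u^*]$, it is bounded below by some $\varepsilon>0$; part ii) of the same Lemma then gives a unique smooth solution $(\beta^-,r^-)$ of \eqref{eq:17} on $[0,u^*]$ with bounded derivatives. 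The hypothesis $t^+(0)=t^-(0)$, together with the matching relations \eqref{eq:71}, ensures that the two data sets agree to all orders at the corner $(u,v)=(0,0)$.

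Next I would run the construction of Section 4 for the region adjacent to $C^+$. The local existence proposition produces a $C^1$ solution in a small corner satisfying the bounds \eqref{eq:53}, \eqref{eq:56}; the continuation lemma extends it, for $h''$ chosen small in terms of the size of the data, across the full range of $v$ to all of $\Pi_{h''v^*}$; and the regularity lemma shows that all partial derivatives are uniformly bounded there, so the solution is smooth. Uniqueness in $\Pi_{h''v^*}$ follows from the uniqueness proposition together with the same continuation. The identical chain of arguments applies to the region adjacent to $C^-$, with the thin direction now being $v$ and the continuation carried out in $u$: the estimates of Section 4 are insensitive to interchanging the two families of characteristics (within the strip $r$ stays comparable to its values along $C^-$, hence bounded away from the center), so for $h'$ small we obtain a unique smooth solution in $\Pi_{u^*h'}$.

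I would then glue the two solutions. Choosing $h'$ and $h''$ small enough that the uniqueness proposition governs the corner $\Pi_{h''h'}=\Pi_{u^*h'}\cap\Pi_{h''v^*}$, both strip solutions restrict there to $C^1$ solutions of one and the same corner problem, whose data are the restrictions of the $C^+$- and $C^-$-data to $v\in[0,h']$ and $u\in[0,h'']$; uniqueness forces them to coincide on the overlap. The two smooth maps thus define a single map on $\Pi_{u^*h'}\cup\Pi_{h''v^*}$. Every interior point of this L-shaped union satisfies $u\le h''$ or $v\le h'$ and hence lies in one of the two strips, where the glued map equals a smooth solution of the characteristic system; consequently the combined map is smooth and solves the system on the whole union, and the prescribed values \eqref{eq:198} hold by construction. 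This proves existence and uniqueness.

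The step I expect to be the crux is the gluing, not either strip construction: one must be sure the two independently built solutions are literally the same solution on the overlap. This rests on two points. First, the all-orders compatibility of the data at $(0,0)$, coming from $t^+(0)=t^-(0)$ and \eqref{eq:71}, is what makes each strip solution smooth up to the corner, so that a single $C^1$ (indeed smooth) uniqueness statement can be applied there. Second, the thresholds on $h'$ and $h''$ arising from the $C^+$-strip construction, from the $C^-$-strip construction, and from the corner uniqueness must be met simultaneously; since each threshold depends only on the size of the data---through the constants $A$, $B$, $D$, $G$, $M$, $r_m$, $r_M$, $l$ and their $C^-$-analogues---taking the minimum of the finitely many thresholds suffices, and this is exactly the dependence asserted in the statement.
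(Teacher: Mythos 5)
Your proposal is correct and follows essentially the same route as the paper: solve the constraint equations to obtain smooth characteristic data, run the corner existence, continuation-to-a-strip, and higher-regularity lemmas for the region adjacent to $C^+$, repeat symmetrically for $C^-$, and combine. The only difference is that you make the gluing on the overlap $\Pi_{h''h'}$ explicit via the uniqueness proposition, a step the paper leaves implicit; this is a reasonable and welcome addition rather than a departure.
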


\begin{figure}[h!]
\begin{center}
\begin{tikzpicture}
\filldraw [gray!30] (0,0) -- (-0.2,0.2) --(2.5,2.9) -- (2.7,2.7);
\filldraw [gray!30] (0,0) -- (0.3,0.7) --(-1.2,2.2) -- (-1.7,1.7);
\draw [->](0,0) -- (-2.3,2.3);
\draw [->](0,0) -- (3.3,3.3);
\node at (-2.5,2.5) {$u$};
\node at (3.5,3.5) {$v$};
\draw (0.3,0.7) -- (2.5,2.9) node[midway,sloped,above] {$u=h''$};
\draw (0.3,0.7) -- (-1.2,2.2) node[midway,sloped,above] {$v=h'$};
\draw (2.5,2.9) -- (2.7,2.7) ;
\draw (-1.2,2.2) -- (-1.7,1.7) ;
\draw [dashed] (2.5,2.9) -- (1.5,3.9) node[midway,sloped,above] {$v=v^\ast$};
\draw [dashed] (-1.2,2.2) -- (-0.1,3.3) node[midway,sloped,above] {$u=u^\ast$};
%\draw [dashed] (0,0.4) -- (-1,1.4) node[midway,sloped,above] {$v=\varepsilon$};
\node at (-1.4,0.8) {$C^-$};
\node at (1.4,0.8) {$C^+$};
\end{tikzpicture}
\end{center}
\caption[Maximal development, comparison]{The domain $\Pi_{u^\ast h'}\cup\Pi_{h'' v^\ast}$}
\label{position}
\end{figure}
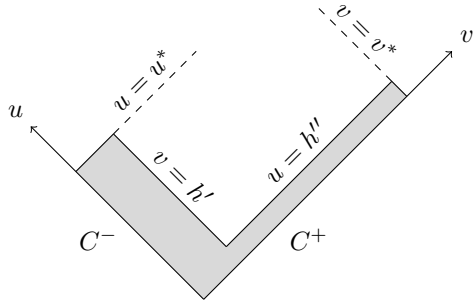

\subsection{Solution in the $t$-$r$-plane\label{tr_solution}}
We consider the map $(u,v)\mapsto(t,r)$. In view of
\begin{align}
  \label{eq:190}
\left|\begin{array}{cc}
      \partial t/\partial u & (\partial t/\partial u)c_- \\
      \partial t/\partial v & (\partial t/\partial v)c_+
    \end{array}\right|&=\left|\begin{array}{cc}
  \mu & \mu c_- \\
  \nu & \nu c_+
\end{array}\right|\notag\\
&=2\mu\nu \eta,
\end{align}
a solution of the characteristic system corresponds to a solution in the $t$-$r$-plane as long as
\begin{align}
  \label{eq:191}
  \mu,\nu>0.
\end{align}
We recall that $\mu(u,0)=\nu(0,v)=1$. Let us assume that $\mu(0,v),\nu(u,0)>0$. Since we have bounds on the second derivatives of $t$ we deduce that for $h'$, $h''$ sufficiently small, the solution in $\Pi_{u^\ast h'}\cup\Pi_{h''v^\ast}$ corresponds to a solution in the $t$-$r$-plane.

\begin{bibdiv}
\begin{biblist}

\bib{Riemann}{article}{
title={\"Uber die Fortpflanzung ebener Luftwellen von endlicher Schwingungsweite},
author={Riemann, Bernhard},
date={1860},
publisher={Hermann},
journal = {Abhandlungen der K\"{o}niglichen Gesellschaft der Wissenschaften zu G\"{o}ttingen}
}

\bib{Courant_Friedrichs}{book}{
title = {Supersonic Flow and Shock Waves},
author = {Courant, R.},
author = {Friedrichs, K.O.},
year = {1948}
publisher = {Springer-Verlag},
series={Applied Mathematical Sciences},
year={1999},
publisher={Springer New York}
}

\begin{comment}
\bib{Luk}{article}{
title={ On the local existence for the characteristic initial value problem in general relativity,}
author={Luk, J.}
year={2012}
journal={Int. Mat. Res. Notices}
}

\bib{Rendall}{article}{
title={Reduction of the Characteristic Initial Value Problem to the Cauchy Problem and its Applications to the Einstein Equations,}
author={Rendall, A.~D.}
year={1990}
journal={Proc.~R.~Soc.~Lond.~A}
}

\bib{Christodoulou}{book}{
title = {The Formation of Black Holes in General Relativity},
author = {Christodoulou, D.},
year = {2009}
series={EMS Monographs in Mathematics},
year={2009},
publisher={Z\"urich: European Mathematical Society (EMS)}
}
\end{comment}

\bib{Christodoulou_2007}{book}{
title = {The Formation of Shocks in 3-Dimensional Fluids},
author = {Christodoulou, D.},
year = {2007}
series={EMS Monographs in Mathematics},
year={2007},
publisher={Z\"urich: European Mathematical Society (EMS)}
}

\bib{Christodoulou_Lisibach}{article}{
title = {Shock Development in Spherical Symmetry, Arxiv:},
author = {Christodoulou, D.},
author = {Lisibach, A.},
year = {2015}
NOTE = {preprint, \url{http://arxiv.org/abs/1501.04235}}
}

\end{biblist}
\end{bibdiv}

\end{document}